\newtheorem{theorem}{{\bf Theorem}}[section]
\newtheorem{corollary}[theorem]{{\bf Corollary}}
\newtheorem{definition}[theorem]{{\bf Definition}}
\newtheorem{lemma}[theorem]{{\bf Lemma}}
\newtheorem{proposition}[theorem]{{\bf Proposition}}
\newtheorem{remark}[theorem]{{\bf Remark}}
\begin{document}


\author[1] {Basudeb Datta}
\author[2] { Dipendu Maity}

\affil[1]{Department of Mathematics, Indian Institute of Science, Bangalore 560\,012, India.
 dattab@iisc.ac.in, bdatta17@gmail.com.}

\affil[2]{Department of Sciences and Mathematics,
Indian Institute of Information Technology Guwahati,
Bongora, Assam 781\,015, India.
dipendu@iiitg.ac.in.}

\title{Platonic solids, Archimedean solids and semi-equivelar maps on the sphere}


\date{To appear in `{\bf Discrete Mathematics}'}

\maketitle

\vspace{-10mm}

\begin{abstract}
A map $X$ on a surface is called vertex-transitive if the automorphism group of $X$ acts transitively on the set of vertices of $X$.  A map is called semi-equivelar if the cyclic arrangement of faces around each vertex is same. In general, semi-equivelar maps on a surface form a bigger class than vertex-transitive maps. There are semi-equivelar maps on the torus, the Klein bottle and other surfaces which are not vertex-transitive.  

It is known that the boundaries of  Platonic solids, Archimedean solids,  regular prisms and  anti-prisms are vertex-transitive maps on $\mathbb{S}^2$. Here we show that there is exactly one semi-equivelar map on $\mathbb{S}^2$ which is not vertex-transitive.  As a consequence, we show that all the semi-equivelar maps on $\mathbb{RP}^2$ are vertex-transitive. Moreover, every semi-equivelar map on $\mathbb{S}^2$ can be geometrized, i.e., every semi-equivelar map on $\mathbb{S}^2$ is isomorphic to a semi-regular tiling of $\mathbb{S}^2$. In the course of the proof of our main result, we present a combinatorial characterisation in terms of  an inequality of all the types of semi-equivelar maps on $\mathbb{S}^2$. Here we present combinatorial proofs of all the results. 

\end{abstract}

\noindent {\small {\em MSC 2020\,:} 52C20, 52B70, 51M20, 57M60.

\noindent {\em Keywords:} Polyhedral maps on sphere; Vertex-transitive maps; Semi-equivelar maps; Semi-regular tilings; Archimedean solids.}


\section{Introduction} \label{sec:introduction}

By a map we mean a polyhedral map on a surface. So, a face of a map is an $n$-gon for some $n\geq 3$ and two intersecting  faces intersect either on a vertex or on an edge.  A map on a surface is also called a {\em topological tiling} of the surface. If all the faces of a map are triangles then the map is called {\em simplicial}. Maps on the sphere and the torus are called {\em spherical} and {\em toroidal} maps respectively.  A map $X$ is said to be  {\em vertex-transitive} if the automorphism group $\mbox{Aut}(X)$ of $X$ acts transitively on the set  $V(X)$ of vertices of $X$. In \cite{lutz1999}, Lutz found all the (77 in numbers) vertex-transitive simplicial maps  with at most $15$ vertices.

For a vertex $u$ in a map $X$, the faces containing $u$ form a cycle (called the {\em face-cycle} at $u$) $C_u$ in the dual graph  of $X$. So, $C_u$ is of the form $(F_{1,1}\mbox{-}\cdots \mbox{-}F_{1,n_1})\mbox{-}\cdots\mbox{-}(F_{k,1}\mbox{-}\cdots \mbox{-}F_{k,n_k})\mbox{-}F_{1,1}$, where $F_{i,j}$  is a  $p_i$-gon for $1\leq j \leq n_i$ and  $p_i\neq p_{i+1}$  for $1\leq i\leq k$  (addition in the suffix is modulo $k$).  A map $X$ is called {\em semi-equivelar} if the cyclic arrangement of faces around each vertex is same. More precisely,  there exist integers $p_1, \dots, p_k\geq 3$ and $n_1, \dots, n_k\geq 1$, $p_i\neq p_{i+1}$, such that $C_u$ is of the form as above for all $u\in V(X)$. In such a case, $X$ is called a {\em semi-equivelar map of  vertex-type} $[p_1^{n_1}, \dots, p_k^{n_k}]$ (or, a {\em map of type} $[p_1^{n_1}, \dots, p_k^{n_k}]$). (We identify a cyclic tuple $[p_1^{n_1}, p_2^{n_2}, \dots, p_k^{n_k}]$ with $[p_k^{n_k}, \dots, p_2^{n_2}, p_1^{n_1}]$ and with $[p_2^{n_2}, \dots, p_k^{n_k}, p_1^{n_1}]$.) Clearly, vertex-transitive maps are semi-equivelar.

A {\em semi-regular tiling} of a surface $S$ of constant curvature (\textit{eg.}, the round sphere, the Euclidean plane or the hyperbolic plane) is a semi-equivelar map on $S$ in which each face  is a regular polygon and each edge is a geodesic. 
An {\em Archimedean tiling} of the plane $\mathbb{R}^2$ is a semi-regular tiling of the Euclidean plane. 
There are eleven types of semi-equivelar toroidal maps and all these are quotients of Archimedean tilings of the plane (\cite{DM2017}, \cite{DM2018}). Among these 11 types, 4 types of maps are always  vertex-transitive and there are infinitely many such examples in each type (\cite{Ba1991}, \cite{DM2017}). For each of the other seven types, there exists a semi-equivelar toroidal map  which is not vertex-transitive (\cite{DM2017}). Although, there are vertex-transitive maps of each of these seven types also (\cite{Ba1991}, \cite{Su2011t}, \cite{Th1991}). Similar results are known for the Klein bottle (\cite{Ba1991}, \cite{DM2017}, \cite{Su2011kb}). If the Euler characteristic $\chi(M)$ of a surface $M$ is negative then the number of semi-equivelar maps on $M$ is finite and at most $-84\chi(M)$ (\cite{Ba1991}).
Nine examples of non-vertex-transitive semi-equivelar maps on the surface of Euler characteristic $-1$ are known (\cite{TU2017}). There are exactly three non vertex-transitive semi-equivelar  simplicial maps on the double torus  (\cite{DU2006}). 

It follows from the results in \cite{DG2021} that there exist semi-regular tilings of the hyperbolic plane of infinitely many different vertex-types. It is also shown that there exists a unique semi-regular tiling of the hyperbolic plane of vertex-type $[p^q]$ for each pair $(p, q)$ of positive integers satisfying $1/p+1/q<1/2$. Moreover, these tilings are vertex-transitive. 
It follows from the results in \cite{GS1978} and \cite{M2020} that there exists a semi-equivelar map on the plane of vertex-type $[3^1, p^3]$ for each $p\geq 5$ odd and there does not exist any vertex-transitive map on the plane of vertex-type $[3^1,q^3]$, for each $q \equiv \pm 1$ (mod 6). 

All vertex-transitive spherical maps  are known. These are the boundaries of Platonic solids, Archimedean solids and two infinite families (\cite{Ba1991}, \cite{GS1981}). Other than these, there exists a semi-equivelar spherical map, namely, the boundary of the pseudorhombicuboctahedron (\cite{Gr2009}, \cite{wiki}). It is known that quotients of ten centrally symmetric vertex-transitive spherical maps (namely, the boundaries of icosahedron, dodecahedron and eight Archimedean solids) are all the vertex-transitive maps on the real projective plane $\mathbb{RP}^2$ (\cite{Ba1991}). Here we show that these are also all the semi-equivelar maps on $\mathbb{RP}^2$.  We prove

\begin{theorem}  \label{thm:s2}
Let $X$ be a semi-equivelar spherical map. Then, up to isomorphism, $X$ is the boundary of a Platonic solid,   an Archimedean solid, a regular prism, an antiprism or the pseudorhombicuboctahedron.
\end{theorem}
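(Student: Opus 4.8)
The plan is to reduce the classification to a finite combinatorial problem by using the Euler relation, and then to analyze the finitely many surviving vertex-types one at a time. Write the common vertex-type as $[p_1^{n_1}, \dots, p_k^{n_k}]$, and let $d = n_1 + \cdots + n_k$ be the degree of every vertex. If $X$ has $V$ vertices, $E$ edges and $F$ faces, then $2E = dV$ and, counting incidences of $p$-gons, the number of $p$-gonal faces is determined by $V$ and the type; summing gives $F$ as a rational multiple of $V$. Plugging into $V - E + F = 2 > 0$ forces the "angle-defect" quantity
\[
\frac{1}{d} - \frac12 + \sum_{i=1}^{k} \frac{n_i}{d\,p_i} > 0,
\]
i.e. $\sum_i n_i\bigl(\tfrac1{p_i} + \tfrac12\bigr) > \tfrac{d}{2}+1$, equivalently $\sum_i \tfrac{n_i}{p_i} > \tfrac{d}{2} - 1 = \tfrac12\sum_i n_i - 1$. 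This single strict inequality is exactly the promised "combinatorial characterization in terms of an inequality," and it is the engine of the whole argument: I would first prove this inequality is necessary, and then show (the finiteness step) that only finitely many cyclic tuples $[p_1^{n_1},\dots,p_k^{n_k}]$ with all $p_i \ge 3$, $n_i \ge 1$, $p_i \ne p_{i+1}$ satisfy it. Finiteness follows because each $p_i \ge 3$ gives $\tfrac{n_i}{p_i} \le \tfrac{n_i}{3}$, so the inequality forces $\sum n_i \le 5$ unless some $p_i = 3$ with multiplicity; a short case check on $k$ and on the $n_i$ bounds the list.

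Next I would enumerate the solutions of the inequality explicitly. The expectation is that the list is precisely: the five Platonic types $[3^3]$, $[4^3]$, $[3^5]$, $[5^3]$, $[3^6]$ — wait, $[3^6]$ gives equality, not strict inequality, so it is excluded, consistent with there being no $[3^6]$-map on $\mathbb S^2$; the thirteen Archimedean types such as $[3,6,6]$ with $k$ and $n_i$ read off the usual names, $[3,4,3,4]$, $[3,4,5,4]$, $[4,6,8]$, $[4,6,10]$, $[3,3,3,3,4]$, $[3,3,3,3,5]$, etc.; the prism types $[4^2, m]$ for each $m \ge 3$ (note $[4^2,4] = [4^3]$ is the cube, already counted); the antiprism types $[3^3, m]$ for each $m \ge 4$ (with $[3^3,3] = [3^5]$ the octahedron). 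Crucially, the type $[3,4,4,4]$ of the pseudorhombicuboctahedron is the \emph{same} cyclic tuple as the small rhombicuboctahedron's $[3,4,4,4]$; so the inequality alone does not separate them, and the realization question — given the type, which maps on $\mathbb S^2$ occur — must be settled separately for each type. I would organize this second phase by the number $k$ of distinct gon-sizes: for $k \le 2$ the prisms/antiprisms/Platonic solids, for $k = 3$ the bulk of the Archimedean cases plus the pseudorhombicuboctahedron, for $k \ge 4$ the remaining few.

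For the realization phase — showing that for each admissible type the map is unique up to isomorphism, with the single exception of $[3,4,4,4]$ which has exactly two realizations — I would argue locally and propagate. Fix a vertex and its forced face-cycle; this determines the link, hence the 1- and 2-neighborhoods combinatorially up to finitely many choices; since the vertex-type is globally constant and $\mathbb S^2$ is simply connected, one shows the map is built up layer by layer with the structure essentially forced, so that any two maps of the same type are isomorphic — except that in the $[3,4,4,4]$ case there is one genuine binary choice (a "twist" of one square cupola relative to the rest) that gives two nonisomorphic maps, the Archimedean solid and the pseudorhombicuboctahedron. The main obstacle, and the part requiring the most care, is precisely this rigidity/propagation argument for the $k\ge 3$ types: it is easy to see the type constrains the local picture, but proving that the global map is thereby determined — and honestly locating the unique place where a choice survives — requires a careful, essentially case-by-case patching of neighborhoods (keeping track of which edges of a face are "used up" and which gon must sit across each free edge), and ruling out non-spherical or non-polyhedral closures. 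I would handle the two infinite families (prisms, antiprisms) first since they are easy, then dispatch the Platonic types, then grind through the $k=3$ and $k\ge 4$ Archimedean types, isolating $[3,4,4,4]$ for the final, most delicate argument.
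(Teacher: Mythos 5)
Your overall skeleton matches the paper's: use Euler's relation to derive the strict inequality $\sum_i n_i(p_i-2)/p_i<2$ on the vertex-type, enumerate the admissible types, then prove uniqueness of the realization type by type. But there are two concrete gaps. First, the inequality alone does \emph{not} cut the list down to the Platonic/Archimedean/prism/antiprism types: it is satisfied by many tuples that are not the vertex-type of any map on any surface, e.g.\ $[5^2,9^1]$, $[3^2,4^2]$, $[3^2,5^2]$, $[3^1,4^1,5^1]$, $[3^1,7^2]$, $[4^1,5^2]$, $[3^1,4^2,5^1]$, $[3^1,5^1,3^1,6^1]$, and a sizeable family of the form $[3^1,q_2^1,q_3^1]$ with $q_3$ as large as $41$. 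Your claim that the solution set is ``precisely'' the desired list is therefore false as stated, and eliminating the spurious tuples requires a separate combinatorial obstruction --- the paper invokes a parity/counting lemma (its Lemma 3.2: roughly, an odd-gon occurring with multiplicity one or two and no equal neighbour in the cyclic tuple forces a contradiction by a matching argument around that face). Nothing in your proposal supplies this, and deferring it to the ``realization phase'' does not help because you never actually argue non-existence there. (Relatedly, your ``finiteness'' claim is inconsistent with your own list: the prism and antiprism families $[4^2,m]$ and $[3^3,m]$ are infinitely many solutions, and your Platonic list omits $[3^4]$.)

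Second, the uniqueness phase --- the real content of the theorem for types such as $[4^1,6^1,10^1]$ on $120$ vertices, $[3^1,4^1,5^1,4^1]$ on $60$, and the two snub types $[3^4,4^1]$, $[3^4,5^1]$ --- is left as a plan (``argue locally and propagate, patch neighbourhoods case by case''). Simple connectivity of $\mathbb{S}^2$ does make such a propagation feasible in principle, but you have not executed it, and for the large types it is a formidable case analysis in which it is easy to overlook a surviving global choice (as indeed one survives for $[3^1,4^3]$). The paper takes a structurally different and much shorter route here: it defines combinatorial truncation and rectification operators, shows each large type is the truncation or rectification of a smaller semi-equivelar map (e.g.\ contracting the pentagons of a $[5^1,6^2]$ map yields a $12$-vertex degree-$5$ triangulation, necessarily the icosahedron), and for the snubs removes a canonical perfect matching of ``deep blue'' edges to reduce to the rhombi-solids; the $[3^1,4^3]$ dichotomy is settled by counting, for each square, how many other squares it meets along edges. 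If you intend to keep your propagation strategy you must actually carry it out for each type; as written, the proposal establishes the enumeration framework but neither the pruning of non-realizable types nor the uniqueness statements that the theorem requires.
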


\begin{theorem}  \label{thm:rp2}
If $Y$ is a semi-equivelar map on $\mathbb{RP}^2$ then  the vertex-type of $Y$ is $[5^3]$, $[3^5],$ $[4^1, 6^2],$ $[3^1, 5^1, 3^1, 5^1], [3^1, 4^3],$ $[4^1, 6^1, 8^1], [3^1, 4^1, 5^1, 4^1], [4^1, 6^1, 10^1], [3^1, 10^2]$ or $[5^1, 6^2]$. Moreover, in each case, there exists a unique semi-equivelar map on $\mathbb{RP}^2$.
\end{theorem}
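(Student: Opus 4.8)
The plan is to exploit the double cover $\eta\colon \mathbb{S}^2\to\mathbb{RP}^2$ so as to reduce everything to Theorem~\ref{thm:s2}. Given a semi-equivelar map $Y$ on $\mathbb{RP}^2$, I would first pull it back along $\eta$ to a map $\widetilde Y$ on $\mathbb{S}^2$: each cell of $Y$ has two disjoint preimages, and since $\eta$ is a local homeomorphism that is injective on each (contractible) face, $\widetilde Y$ is again a polyhedral map and the face-cycle at a lift $\tilde u$ of a vertex $u$ is isomorphic to the face-cycle at $u$. Hence $\widetilde Y$ is semi-equivelar of the same vertex-type as $Y$, and the nontrivial deck transformation $\sigma$ is a fixed-point-free automorphism of $\widetilde Y$ with $\sigma^2=\mathrm{id}$ and $Y=\widetilde Y/\langle\sigma\rangle$. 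Conversely, such a pair $(\widetilde Y,\sigma)$ recovers a map on $\mathbb{RP}^2$ exactly when the quotient is polyhedral. Thus classifying semi-equivelar maps on $\mathbb{RP}^2$ is equivalent to classifying pairs $(\widetilde Y,\sigma)$ in which $\widetilde Y$ ranges over the maps of Theorem~\ref{thm:s2} and $\sigma$ is a fixed-point-free involutory automorphism whose quotient is polyhedral.

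Next I would run through the list supplied by Theorem~\ref{thm:s2}. A fixed-point-free involution of $\mathbb{S}^2$ is orientation-reversing, since an orientation-preserving self-homeomorphism of $\mathbb{S}^2$ has Lefschetz number $2$ and hence a fixed point; so combinatorially $\sigma$ behaves like the central inversion $-I$. The tetrahedron has no such automorphism at all. For the cube ($[4^3]$), the octahedron ($[3^4]$) and the remaining prisms and antiprisms, I would show that even when a central-type involution exists the quotient fails to be polyhedral: two quotient faces come to share two edges, or two distinct triangular faces acquire the same three vertices (precisely what happens for the cube and octahedron), so these are discarded. The key subtlety is vertex-type $[3^1,4^3]$, which by Theorem~\ref{thm:s2} is realized on $\mathbb{S}^2$ by \emph{two} maps, the rhombicuboctahedron and the pseudorhombicuboctahedron; here I would show that the $45^{\circ}$ twist destroys the central symmetry, so that the automorphism group of the pseudorhombicuboctahedron contains no fixed-point-free involution and only the rhombicuboctahedron contributes. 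What survives are exactly the icosahedron, the dodecahedron and the eight centrally symmetric Archimedean solids, producing the ten vertex-types in the statement.

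For existence it remains to verify that each of these ten maps genuinely admits a fixed-point-free involution with polyhedral quotient, namely the central inversion; the quotients are the familiar hemi-polyhedra (for instance the hemi-icosahedron and the hemi-dodecahedron), whose cell counts satisfy $V-E+F=1$. For uniqueness, suppose $Y_1,Y_2$ are semi-equivelar maps on $\mathbb{RP}^2$ of the same listed vertex-type $T$. Their covers are semi-equivelar maps on $\mathbb{S}^2$ of type $T$ admitting free involutions; by the previous paragraph this forces $\widetilde{Y_1}\cong\widetilde{Y_2}=:\widetilde Y$, even in the $[3^1,4^3]$ case since the pseudorhombicuboctahedron is excluded. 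I would then show that the free involution of $\widetilde Y$ is unique: its automorphism group is a finite subgroup of $O(3)$ in which the only fixed-point-free involution is $-I$ (reflections fix a great circle and $\pi$-rotations fix the poles). Hence $\sigma_1=\sigma_2=-I$, and $Y_1=\widetilde Y/\langle\sigma_1\rangle\cong\widetilde Y/\langle\sigma_2\rangle=Y_2$.

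The main obstacle is the middle step: proving, in a self-contained combinatorial way, exactly which sphere maps admit a fixed-point-free involution with polyhedral quotient, and that this involution is unique. The delicate cases are the small maps (cube, octahedron, prisms, antiprisms), where one must pin down precisely how the quotient violates the polyhedral condition, and the pair sharing vertex-type $[3^1,4^3]$, where the rhombicuboctahedron must be separated from the pseudorhombicuboctahedron purely through their automorphism groups. Establishing that each listed map carries an essentially unique antipodal structure is exactly what yields the ``unique map in each case'' half of the theorem.
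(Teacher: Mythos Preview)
Your overall architecture---pull back along the double cover, invoke the classification on $\mathbb{S}^2$, and then determine which sphere maps admit a fixed-point-free involution with polyhedral quotient---is exactly the paper's approach, and your treatment of the pseudorhombicuboctahedron and of uniqueness (via the uniqueness of $-I$ among fixed-point-free involutions in the relevant subgroup of $O(3)$) is fine.

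There is, however, a real gap in the elimination step. You assert that ``what survives are exactly the icosahedron, the dodecahedron and the eight centrally symmetric Archimedean solids,'' but there are \emph{ten} centrally symmetric Archimedean solids, not eight: the cuboctahedron ($[3^1,4^1,3^1,4^1]$) and the truncated cube ($[3^1,8^2]$) both have $-I$ in their symmetry group, yet neither appears in the theorem's list. The reason is that their antipodal quotients fail to be polyhedral: for the truncated cube, two non-antipodal octagons share one edge, and so do their antipodes, so the quotient octagons meet in two edges; for the cuboctahedron, two non-antipodal squares meet in a vertex, and so do their antipodes, so the quotient squares meet in two isolated vertices. You checked this kind of failure for the cube, octahedron, prisms and antiprisms, but you must also carry it out for these two Archimedean solids. (The truncated tetrahedron, snub cube and snub dodecahedron are disposed of by your Lefschetz/chirality remark, but you should say so explicitly rather than folding it into the phrase ``centrally symmetric.'')

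The paper handles these extra cases in one line by listing $(6,[3^1,4^1,3^1,4^1])$, $(12,[3^1,8^2])$, $(6,[3^1,6^2])$, etc.\ as ``clearly not possible''; your argument needs the same check, with the polyhedrality violation spelled out, before the list of ten vertex-types is established.
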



\begin{corollary} \label{cor:s2vt}
The boundary of the pseudorhombicuboctahedron is not vertex-transitive and all the other semi-equivelar spherical maps are vertex-transitive.
\end{corollary}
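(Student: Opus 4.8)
The plan is to prove Corollary~\ref{cor:s2vt} as a direct consequence of Theorem~\ref{thm:s2}, which reduces every semi-equivelar map on $\mathbb{S}^2$ to one of a finite list of models: the boundaries of the Platonic solids, the Archimedean solids, the regular prisms, the antiprisms, and the single exceptional map, the boundary of the pseudorhombicuboctahedron. Since vertex-transitivity is an isomorphism invariant, it suffices to verify the claim on each entry of this list. For every family except the exceptional one, I would cite that these are already known to be vertex-transitive maps on $\mathbb{S}^2$, as recorded in the introduction (from \cite{Ba1991}, \cite{GS1981}): the five Platonic solids, the thirteen Archimedean solids, the infinite family of regular prisms, and the infinite family of antiprisms all carry vertex-transitive maps. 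Thus the only substantive work is to handle the pseudorhombicuboctahedron separately and show that its boundary is \emph{not} vertex-transitive.

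First I would recall the structure of the pseudorhombicuboctahedron. Like the (genuine) rhombicuboctahedron it has vertex-type $[3^1,4^3]$, so every vertex looks locally the same (which is exactly why it is semi-equivelar), and it has $24$ vertices, all of valence $4$. The two solids differ globally: the rhombicuboctahedron is obtained as three ``rings'' of squares capped top and bottom, whereas the pseudorhombicuboctahedron is obtained from it by giving the top cap a $45^\circ$ twist relative to the middle band. The plan is to show that this twist destroys the global symmetry that would be needed to move certain vertices to certain others, even though the local face-cycle type is preserved everywhere.

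To prove non-transitivity I would argue combinatorially, exhibiting an automorphism-invariant that distinguishes two vertices. The cleanest route is to look at the arrangement of triangular faces. In the pseudorhombicuboctahedron the eight triangles split into two groups of four in a way that is not symmetric between the two polar caps; concretely, one can consider, for each vertex $v$, the combinatorial configuration formed by the triangles lying within a bounded graph-distance of $v$ (equivalently, the local pattern of how the unique triangle at $v$ sits relative to the triangles at nearby vertices). I would show that there exist two vertices $u$ and $w$ whose second-neighborhood patterns of triangles are non-isomorphic as labelled configurations, so that no automorphism of the map can carry $u$ to $w$. Since an automorphism must preserve the set of triangular faces and all incidences, any such invariant computed from the triangle arrangement is preserved, and a single mismatch suffices to conclude that the automorphism group does not act transitively on vertices.

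The main obstacle, and the step requiring genuine care, will be making the distinguishing invariant rigorous rather than merely geometrically plausible: because the map is semi-equivelar, \emph{every} vertex has the identical immediate face-cycle $[3^1,4^3]$, so no first-order local count can separate the orbits, and one must go out at least to second neighborhoods (or equivalently track how the triangle-faces are globally linked through chains of squares) to detect the asymmetry introduced by the twist. I would therefore spend the bulk of the proof pinning down an explicit combinatorial quantity --- for instance, for each vertex the multiset of distances from its incident triangle to the other seven triangles, or the isomorphism type of the subgraph spanned by triangle-adjacent vertices --- and checking by direct inspection of the two caps that it takes at least two distinct values. Once such an invariant is produced and shown to be automorphism-invariant, the corollary follows immediately, completing the proof.
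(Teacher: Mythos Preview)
Your overall strategy matches the paper's: reduce via Theorem~\ref{thm:s2} to the finite list of models, cite \cite{Ba1991} for vertex-transitivity of the Platonic, Archimedean, prism and antiprism boundaries, and then argue separately that the pseudorhombicuboctahedron is not vertex-transitive. That part is fine and essentially identical to the paper.

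Where you diverge is in the choice of distinguishing invariant for the exceptional case. You propose to track second-neighborhood patterns of the eight triangles (e.g.\ the multiset of distances from the triangle through a vertex to the remaining seven triangles). This would work, but it is heavier than necessary and leaves the actual verification as a promise. The paper instead uses an invariant on the \emph{squares}: call a square of the $[3^1,4^3]$ map \emph{type~$i$} if it meets exactly $i$ other squares along edges. A short count (already carried out in the paper's analysis of the $[3^1,4^3]$ case) shows that in the pseudorhombicuboctahedron the type distribution is $(s_2,s_3,s_4)=(8,8,2)$, so there are exactly two type-$4$ squares. Since any automorphism must preserve the type of each square, the $8$ vertices lying on a type-$4$ square cannot be mapped to the $16$ vertices not lying on one, and non-transitivity follows in one line. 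By contrast, for the genuine rhombicuboctahedron the distribution is $(12,0,6)$, which also immediately distinguishes the two maps.

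So your plan is sound but the paper's invariant is both simpler to state and requires no case-by-case inspection of neighborhoods; it falls out of the same square-type bookkeeping already needed to classify the $[3^1,4^3]$ maps. If you keep your approach you should actually exhibit two vertices with distinct invariants rather than leave it as an intention; but it would be cleaner to switch to the square-type argument.
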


As consequences we get 

\begin{corollary} \label{cor:tiling}
$(a)$ Each semi-equivelar spherical map is isomorphic to a semi-regular tiling of $\mathbb{S}^2$. $(b)$ Each semi-equivelar map on $\mathbb{RP}^2$ is isomorphic to a semi-regular tiling of $\mathbb{RP}^2$. 
\end{corollary}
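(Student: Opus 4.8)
The plan is to deduce both parts from the classification theorems already established, so that the only genuinely geometric input is a standard fact about radial (central) projection from the centre of a sphere.

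For part $(a)$, by Theorem~\ref{thm:s2} any semi-equivelar map $X$ on $\mathbb{S}^2$ is combinatorially isomorphic to the boundary complex of one of finitely many convex polyhedra $P$: a Platonic solid, an Archimedean solid, a regular prism, an antiprism, or the pseudorhombicuboctahedron. For each of these I would invoke the classical realization as a convex polyhedron all of whose faces are regular polygons and all of whose vertices lie on a common sphere $\Sigma$ centred at a point $O$ (inscribability; for prisms and antiprisms this is immediate, and the pseudorhombicuboctahedron is assembled from the same regular square and triangular pieces as the rhombicuboctahedron, hence is likewise inscribable). Then I radially project $\partial P$ from $O$ onto $\Sigma$.

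The key step is that the central projection carries each edge, a chord of $\Sigma$, into the arc cut out by the plane spanned by $O$ and that edge, i.e.\ an arc of a great circle, which is a geodesic. Moreover, since all vertices of a given face lie at the same distance from $O$, the foot of the perpendicular from $O$ to the plane of that face is its circumcentre, which for a regular polygon is its centre; hence the projection is equivariant for the rotational symmetry of the face and sends the regular planar $n$-gon to a regular spherical $n$-gon. Thus $\partial P$ projects to a map on $\mathbb{S}^2$ whose faces are regular polygons and whose edges are geodesics, and, the projection being a cellular homeomorphism, it preserves the combinatorial type. This spherical tiling is therefore a semi-regular tiling isomorphic to $X$. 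For part $(b)$, by Theorem~\ref{thm:rp2} each semi-equivelar map $Y$ on $\mathbb{RP}^2$ is the unique map of its listed vertex-type, and, as recalled in the introduction, each such map is the quotient of one of ten centrally symmetric vertex-transitive maps on $\mathbb{S}^2$ by the antipodal involution. Each of these ten solids is invariant under $x\mapsto -x$, so the semi-regular spherical tiling produced in part $(a)$ is antipodally symmetric. Since the antipodal map is a fixed-point-free isometry of $\mathbb{S}^2$ and $\mathbb{RP}^2=\mathbb{S}^2/\{\pm\mathrm{id}\}$ inherits a constant-curvature metric under which the quotient map is a local isometry, the tiling descends to a tiling of $\mathbb{RP}^2$ whose faces remain regular polygons and whose edges remain geodesics. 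By the uniqueness in Theorem~\ref{thm:rp2}, $Y$ is isomorphic to this semi-regular tiling.

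\textbf{Main obstacle.} The crux is the geometric verification underlying the key step: that every member of the list admits a realization inscribed in a sphere and that central projection of such a polyhedron yields regular spherical polygons. This is routine for the symmetric Platonic, Archimedean, prism and antiprism cases, and the only point requiring care is the pseudorhombicuboctahedron, for which one observes that its metric realization uses the identical regular building blocks and vertex figures as the rhombicuboctahedron and is therefore inscribable with the same circumradius. A secondary, essentially bookkeeping, obstacle is confirming that the combinatorial isomorphism of Theorem~\ref{thm:s2} is compatible with the cellular homeomorphism induced by projection, so that vertex-type is preserved; this is automatic once one notes that radial projection maps vertices, edges and faces bijectively onto their images.
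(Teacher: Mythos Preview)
Your proposal is correct and follows essentially the same approach as the paper: invoke the classification (Theorem~\ref{thm:s2}), realize each polytope as inscribed in a sphere with regular faces and equal edge lengths, and radially project to obtain a semi-regular spherical tiling; for part~(b), descend via the antipodal quotient. The paper handles the inscribability of the pseudorhombicuboctahedron by simply asserting properties (i)--(iv) for it in Section~\ref{example}, whereas you flag it as the one case needing care; your extra justification that central projection sends regular planar polygons to regular spherical polygons is a welcome elaboration the paper leaves implicit.
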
 

\begin{corollary} \label{cor:rp2vt}
All the semi-equivelar maps on $\mathbb{RP}^2$ are vertex-transitive.  
\end{corollary}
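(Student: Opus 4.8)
The plan is to combine the enumeration in Theorem~\ref{thm:rp2} with the already-known classification of vertex-transitive maps on $\mathbb{RP}^2$. Recall from the introduction that every vertex-transitive map is semi-equivelar, and that by \cite{Ba1991} the vertex-transitive maps on $\mathbb{RP}^2$ are exactly the ten antipodal quotients of the centrally symmetric vertex-transitive maps on $\mathbb{S}^2$, namely the boundaries of the icosahedron, the dodecahedron and eight Archimedean solids. The strategy is to match these ten maps, type by type, against the ten vertex-types produced by Theorem~\ref{thm:rp2}, and then to invoke the uniqueness clause of that theorem.

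First I would observe that the antipodal double cover $\pi\colon \mathbb{S}^2 \to \mathbb{RP}^2$ is a local isomorphism of maps: for a centrally symmetric map $Z$ on $\mathbb{S}^2$ whose quotient is again a polyhedral map, $\pi$ carries the face-cycle at a vertex $\tilde u$ of $Z$ bijectively onto the face-cycle at $\pi(\tilde u)$. Consequently the quotient is semi-equivelar of exactly the same vertex-type as $Z$. Applying this to the ten solids above, their quotients are semi-equivelar maps on $\mathbb{RP}^2$ whose vertex-types are read off directly from the solids: $[5^3]$ (dodecahedron), $[3^5]$ (icosahedron), $[4^1,6^2]$ (truncated octahedron), $[3^1,5^1,3^1,5^1]$ (icosidodecahedron), $[3^1,4^3]$ (rhombicuboctahedron), $[4^1,6^1,8^1]$ (truncated cuboctahedron), $[3^1,4^1,5^1,4^1]$ (rhombicosidodecahedron), $[4^1,6^1,10^1]$ (truncated icosidodecahedron), $[3^1,10^2]$ (truncated dodecahedron) and $[5^1,6^2]$ (truncated icosahedron).

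These ten vertex-types are pairwise distinct and coincide exactly with the list in Theorem~\ref{thm:rp2}. Hence every one of the ten admissible vertex-types is realized by a vertex-transitive map on $\mathbb{RP}^2$. Now let $Y$ be an arbitrary semi-equivelar map on $\mathbb{RP}^2$. By Theorem~\ref{thm:rp2}, its vertex-type is one of the ten listed above, and the semi-equivelar map of that type is unique up to isomorphism. Since the corresponding antipodal quotient is a vertex-transitive semi-equivelar map of the very same type, uniqueness forces $Y$ to be isomorphic to it. Therefore $Y$ is vertex-transitive, which proves the corollary.

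I expect the only real obstacle to be the bookkeeping in the middle step: one must confirm that each of the ten centrally symmetric solids does descend to a genuine polyhedral (hence vertex-transitive) map on $\mathbb{RP}^2$ --- equivalently that no two identified faces come to share more than an edge --- and that the resulting ten vertex-types are precisely those in Theorem~\ref{thm:rp2}, with none missing and none spurious. This is exactly the content already recorded in \cite{Ba1991}, so the argument reduces to lining up two independently established lists of ten, after which the uniqueness in Theorem~\ref{thm:rp2} does the rest.
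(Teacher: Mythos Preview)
Your argument is correct and is essentially the paper's own proof, carried out more explicitly: the paper simply observes that, by \cite{Ba1991}, each of the ten maps produced by Theorem~\ref{thm:rp2} is vertex-transitive, so Theorem~\ref{thm:rp2} immediately yields the corollary. Your version spells out the type-by-type matching and appeals to the uniqueness clause explicitly, but the underlying idea is the same.
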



\section{Examples of twenty two face-regular polyhedra} \label{sec:example}

Here we present twenty two  classically known $3$-polytopes.


\begin{figure}[htb]
\begin{center}
\includegraphics[scale=.5]{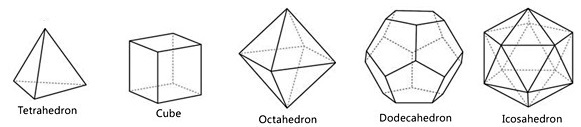}
\caption{Platonic Solids (from \cite{mathfun}).} \label{fig:platonic}
\end{center}
\end{figure}

\vspace{-5mm}

\begin{figure}[htb]
\begin{center}
\includegraphics[scale=.52]{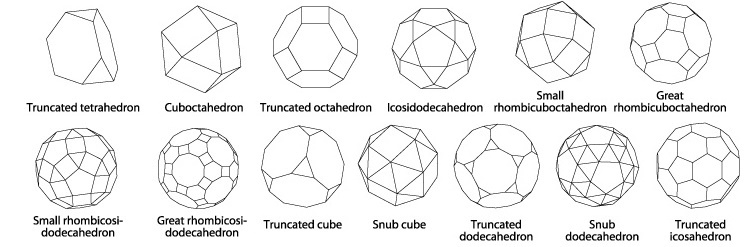}
\caption{Archimedean Solids (from \cite{mathfun}).} \label{fig:archi}
\end{center}
\end{figure}

\vspace{-5mm}

\begin{figure}[htb]
\begin{center}
\includegraphics[scale=.25]{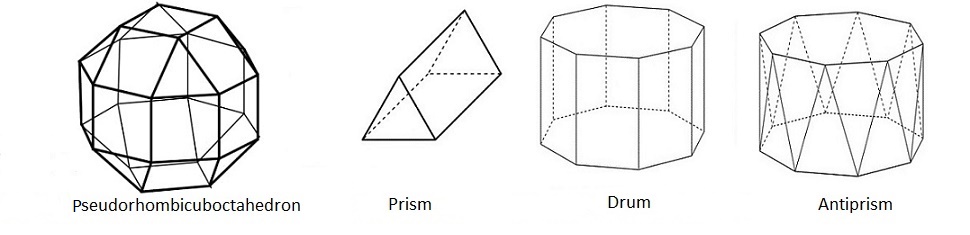}
\caption{Pseudorhombicuboctahedron, Prisms $P_3$ $\&$ $P_8$, Antiprism $Q_8$ (from \cite{Gr2009}, \cite{mathetc}).} \label{fig:prism}
\end{center}
\end{figure}

\vspace{-3mm}

If $P$ is one of the first nineteen of above twenty two  polytopes  then (i) all the vertices of $P$ are points on a  sphere whose centre is same as the centre of $P$, (ii) each 2-face of $P$ is a regular polygon, and (iii) the cyclic arrangement of faces around each vertex is same (equivalently, the boundary complex $\partial P$ of $P$ is a semi-equivelar map). Without loss of generality, we assume that (iv) the vertices of $P$ are points on the unit 2-sphere $\mathbb{S}^{\hspace{.2mm}2}$ with centre $(0,0,0)$. We say a 3-polytope is {\em face-regular} if it satisfies (ii).

For $n\geq 3$, let $P_n$ be the polytope whose  vertex-set  is 
\begin{align*}
\left\{(1+\sin^2\frac{\pi}{n})^{-\frac{1}{2}}\left(\cos\frac{2m\pi}{n}, \sin\frac{2m\pi}{n}, 
\pm\sin\frac{\pi}{n}  \right) : 0\leq m\leq n-1\right\}. 
\end{align*}
The polytope  $P_4$ is a cube and, for $n\neq 4$, the boundary of $P_n$ consists of $n$ squares and two regular $n$-gons. This polytope $P_n$ is called a $2n$-vertex  {\em regular prism} or {\em drum} or  {\em ladder}. We have chosen the vertices  of $P_n$ on the unit sphere $\mathbb{S}^{\hspace{.2mm}2}$ and coordinates are such that  $P_n$ satisfies (i), \dots, (iv) above.

For $n\geq 3$, let $Q_n$ be the polytope whose  vertex-set  is 

\begin{align*}
&\left\{\!(\sin^2\frac{\pi}{n} + \cos^2\frac{\pi}{2n} )^{-\frac{1}{2}}\!\left(\cos\frac{(2m+1)\pi}{n}, 
\sin\frac{(2m+1)\pi}{n}, (\sin^2\frac{\pi}{n} -\sin^2\frac{\pi}{2n})^{\frac{1}{2}} \right),\right.  \\ 
&\quad \left.(\sin^2\frac{\pi}{n} + \cos^2\frac{\pi}{2n} )^{-\frac{1}{2}}\!\left(\cos\frac{2m\pi}{n}, \sin\frac{2m\pi}{n}, - (\sin^2\frac{\pi}{n} -\sin^2\frac{\pi}{2n})^{\frac{1}{2}}\right)  : 0\leq m\leq n-1\right\}.  
\end{align*}
The polytope $Q_3$ is an octahedron and, for $n\geq 4$, the boundary of $Q_n$ consists of $2n$ equilateral triangles  and two regular $n$-gons.  This polytope $Q_n$ is called a $2n$-vertex {\em antiprism}. Moreover, with the chosen coordinates of vertices,  $Q_n$ satisfies  (i), \dots,  (iv) above. 

\smallskip

A  {\em truncation} is an operation that cuts polytope vertices, creating a new face in place of each vertex. 
A {\em rectification} or {\em complete-truncation} is the process of truncating a polytope by marking the midpoints of all its edges, and cutting off its vertices at those points. Thus, a rectification truncates edges to points. For more on truncation  see \cite[Chapter 8]{Cox1973}. In Section \ref{sec:proofs-2}, we present some combinatorial versions of truncation and rectification of polytopes. Proposition \ref{prop:cox} gives some relations among the five Platonic solids and the thirteen 
Archimedean solids in terms of truncation and rectification.

\begin{proposition}[Coxeter]\label{prop:cox}
{\rm The truncation of tetrahedron (respectively, cube, octahedron, dodecahedron, icosahedron, cuboctahedron and icosidodecahedron) gives the truncated tetrahedron (respectively, truncated cube, truncated octahedron, truncated dodecahedron, truncated icosahedron, great rhombicuboctahedron and great rhombicosidodecahedron). The rectification of cube (respectively, dodecahedron, icosidodecahedron and cuboctahedron) gives the cuboctahedron (respectively, icosidodecahedron,   small rhombicosidodecahedron and small rhombicuboctahedron).}
\end{proposition} 

\begin{remark} \label{remark:regular}
{\rm We all know that a 3-polytope is called {\em regular} if its automorphism group acts transitively on the set of flags.  But, there are confusions about the names `semi-regular polyhedra',  `uniform polyhedra' and `Archimedean polyhedra' (\cite{Gr2009}). Agreeing with most of the  authors in the literature, we  write the following. A 3-polytope is  {\em semi-regular} (or {\em uniform}) if its automorphism group acts transitively on the set of vertices. A 3-polytope is an {\em Archimedean polyhedron}  if it satisfies  (ii) and (iii)  above (\cite{Gr2009, Ha1996}). So, a semi-regular polyhedron is an Archimedean polyhedron and  the pseudorhombicuboctahedron is an Archimedean polyhedron which is not a semi-regular polyhedron. Observe that  all the twenty two polyhedra presented above are Archimedean polyhedra. 
}
\end{remark}


\section{Classification of vertex-types of semi-equivelar maps on $\mathbb{S}^{\hspace{.2mm}2}$} \label{sec:proofs-1}

In this section, we present a combinatorial characterisation  of all the types of semi-equivelar maps on $\mathbb{S}^{\hspace{.2mm}2}$ in terms of  an inequality. We need this in Section \ref{sec:proofs-2} to prove Theorem \ref{thm:s2}. 

\smallskip

Let $F_1\mbox{-}\cdots\mbox{-}F_m\mbox{-}F_1$ be the face-cycle of a vertex $u$ in a map. Then $F_i \cap F_j$ is either $u$ or an edge through $u$. Thus, the face $F_i$ must be of the form $u_{i+1}\mbox{-}u\mbox{-}u_i\mbox{-}P_i\mbox{-}u_{i+1}$, where $P_i = \emptyset$ or a path and $P_i \cap P_j = \emptyset$ for $i \neq j$. Here addition in the suffix is modulo $m$. So, $u_{1}\mbox{-}P_1\mbox{-}u_2\mbox{-}\cdots\mbox{-}u_m\mbox{-}P_m\mbox{-}u_1$ is a cycle and said to be the {\em link-cycle} of $u$. For a simplicial complex, $P_i = \emptyset$ for all $i$, and the link-cycle of a vertex is the link of that vertex.

A face in a map of the form $u_1\mbox{-}u_2\mbox{-}\cdots\mbox{-}u_n\mbox{-}u_1$ is also denoted by $u_1u_2\cdots u_n$. The faces with 3, 4, \dots, 10 vertices are called {\em triangle}, {\em square}, \dots, {\em decagon} respectively.

If $X$ is the boundary of a Platonic solid, an Archimedean solid, the pseudorhombicuboctahedron, a regular prism or an antiprism then the vertex-type of $X$ is one of the cycle tuples of the following set.
\begin{align} \label{a-sum<2}
{\mathcal A}  := &\left\{[3^3], [3^4], [4^3], [3^5], [5^3], [3^4, 5^1],  [3^4, 4^1], [3^1, 5^1, 3^1, 5^1], [3^1, 4^1, 3^1, 4^1],  \right. \nonumber \\
& \qquad [3^1, 4^1, 5^1, 4^1], [3^1, 4^3], [5^1, 6^2], [4^1, 6^1, 8^1], [4^1, 6^1, 10^1],  [4^1, 6^2],  \nonumber \\
&\qquad \left. [3^1, 6^2], [3^1, 8^2], [3^1, 10^2], [3^1, 4^2]\}  \cup \{[4^2, r^1], [3^3, s^1], \,  r \geq 5,  s \geq 4\right\}.
\end{align}
If $[p_1^{n_1}, \dots, p_{\ell}^{n_{\ell}}]\in \mathcal{A}$ then one readily checks that $\sum\limits_{i=1}^{\ell}\frac{n_i(p_i-2)}{p_i} < 2$. Here we prove the following converse.

\begin{theorem} \label{thm:inequality}
Let $X$ be a semi-equivelar map of vertex-type  $[p_1^{n_1}, \dots, $ $p_{\ell}^{n_{\ell}}]$ on a $2$-manifold.  If $\sum\limits_{i=1}^{\ell}\frac{n_i(p_i-2)}{p_i} < 2$ then $[p_1^{n_1}, \dots, p_{\ell}^{n_{\ell}}]\in \mathcal{A}$.
\end{theorem}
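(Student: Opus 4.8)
The plan is to convert the single analytic hypothesis into a bound on the vertex-degree, and then to classify the finitely many surviving types by a parity argument on the boundary cycle of one face. First I would set $d=\sum_{i=1}^{\ell}n_i$, the common vertex-degree, and rewrite the hypothesis. Since
\[
\sum_{i=1}^{\ell}\frac{n_i(p_i-2)}{p_i}
=\sum_{i=1}^{\ell}n_i-2\sum_{i=1}^{\ell}\frac{n_i}{p_i}
= d-2\sum_{i=1}^{\ell}\frac{n_i}{p_i},
\]
the assumption is equivalent to $\sum_i n_i/p_i>(d-2)/2$. As every $p_i\ge 3$ we also have $\sum_i n_i/p_i\le d/3$, and combining the two inequalities yields $(d-2)/2<d/3$, i.e. $d<6$. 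Because $d\ge 3$, every vertex has degree $3$, $4$ or $5$, and the theorem reduces to enumerating the admissible types in these three cases.

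The essential tool is a parity lemma on a single face. Fix a $p$-gon $F$ and, for each vertex $v$ of $F$, look at the two faces flanking $F$ across its two edges at $v$; since $X$ is semi-equivelar, these flanks are the cyclic neighbours of the block occupied by $F$ in the common vertex-type, so they form the \emph{same} unordered pair of sizes at every vertex of $F$. I would then prove: if at each vertex the two flanks have different sizes $a\ne b$, then labelling each edge of $F$ by the size of the face on its far side gives a proper $2$-colouring of the $p$-cycle $\partial F$ by $\{a,b\}$ (consecutive edges get different colours), and a cycle admitting a proper $2$-colouring is even, so $p$ is even. The same count, in the form ``exactly one flank of $F$ is a $p$-gon at each vertex,'' disposes of a block $p^2$: summing over the $p$ vertices the number of incident edges of $F$ shared with another $p$-gon gives $p$, while each such edge is counted at both ends, so $p$ is even. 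In particular, no admissible type contains a block $3^2$, and a singleton block $p^1$ whose two neighbouring blocks have unequal sizes forces $p$ even.

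With these in hand the case analysis is mechanical. For $d=3$ the block-shapes are $[p^3]$, $[p^2,q^1]$, $[p^1,q^1,r^1]$: the first gives $p\le 5$; in the second the block $p^2$ forces $p$ even, after which $\tfrac2p+\tfrac1q>\tfrac12$ leaves exactly the prisms $[4^2,r^1]$ and the truncated types ($[3^1,6^2]$, $[4^1,6^2]$, $[5^1,6^2]$, $[3^1,8^2]$, $[3^1,10^2]$); in the third, all three sizes are forced even (each is a singleton with unequal neighbours), and $\tfrac1p+\tfrac1q+\tfrac1r>\tfrac12$ among distinct even numbers leaves only $[4^1,6^1,8^1]$ and $[4^1,6^1,10^1]$. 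For $d=4$ I would run through $[p^4]$, $[p^3,q^1]$, $[p^2,q^2]$, $[p^2,q^1,r^1]$ and the four-singleton arrangements; parity kills every $3^2$-block and every arrangement in which a triangle (or other odd face) is flanked by two unequal faces, which forces the recurring patterns $[3^1,x^1,3^1,x^1]$ and $[3^1,x^1,w^1,x^1]$, and the inequality pins the survivors to $[3^4]$, the antiprisms $[3^3,s^1]$, $[3^1,4^3]$, $[3^1,4^1,3^1,4^1]$, $[3^1,5^1,3^1,5^1]$ and $[3^1,4^1,5^1,4^1]$. For $d=5$ the bound $\sum_i n_i/p_i>\tfrac32$ forces at least four triangular faces (if at most three, the sum is at most $\tfrac33+\tfrac24=\tfrac32$); four triangles in a $5$-cycle are consecutive, so the type is $[3^5]$ or $[3^4,q^1]$ with $q\in\{4,5\}$. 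Every surviving type lies in $\mathcal A$, which is the assertion.

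The hard part will be the parity lemma, not the arithmetic. One must check that \emph{all} occurrences of a given size in the vertex-type present the same unordered flank-pair, since otherwise the $2$-colouring of $\partial F$ need not alternate and the argument collapses; this is what makes the recurring degree-$4$ patterns such as $[3^1,x^1,3^1,x^1]$ (where a size appears in non-adjacent blocks) subtle, and those four-singleton arrangements must be enumerated exhaustively so that no admissible configuration is missed. Care is likewise needed to confirm that each type excluded by parity is genuinely unrealizable on any $2$-manifold, rather than merely absent from $\mathcal A$; the lemma supplies exactly this, because it only uses that each edge of a map bounds two faces and each face is a genuine polygonal cycle.
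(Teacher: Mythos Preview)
Your proposal is correct and follows essentially the same architecture as the paper: bound the degree $d<6$ via the inequality $(d-2)/2<\sum n_i/p_i\le d/3$, then exhaustively sift the finitely many candidate types in degrees $3,4,5$ using parity obstructions. The paper organises the enumeration by the sorted multiset $(q_1^{m_1},\dots,q_k^{m_k})$ rather than by block-shape, but the two decompositions cover the same ground.

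The one substantive difference is the source of the obstruction lemma. The paper imports Lemma~\ref{DM2019} (three separate parity statements) from \cite{DM2019} as a black box, whereas you supply a self-contained argument: your $2$-colouring of $\partial F$ recovers parts~(ii) and~(iii) of that lemma in one stroke, and your edge-double-counting recovers part~(i). Your version is arguably cleaner, and your caveat about needing all occurrences of a size to have the same unordered flank-pair is exactly the right thing to verify---in the relevant degrees it holds automatically (for $d=3$ every size occupies a single block; for $d=4$ a repeated size sits in opposite positions of the $4$-cycle and so inherits the same flank-pair; for $d=5$ the sieve leaves only $[3^5]$ and $[3^4,q^1]$ before parity is ever invoked). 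So the proposal is both correct and marginally more self-contained than the paper's own proof.
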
 

We need the following technical lemma of \cite{DM2020} to prove Theorem \ref{thm:inequality}.

\begin{lemma} [Datta $\&$ Maity] \label{DM2020}
If $[p_1^{n_1}, \dots, p_k^{n_k}]$ satisfies any of the following three properties then $[p_1^{n_1}$, $\dots, p_k^{n_k}]$ cannot be the vertex-type of any semi-equivelar map on a surface.
\begin{enumerate}[{\rm (i)}]
\item There exists $i$ such that $n_i=2$, $p_i$ is odd and $p_j\neq p_i$ for all $j\neq i$.

\item There exists $i$ such that $n_i=1$, $p_i$ is odd, $p_j\neq p_i$ for all $j\neq i$ and $p_{i-1}\neq p_{i+1}$. (Here, addition in the subscripts are modulo $k$.)

\item $[p_1^{n_1}, \dots, p_k^{n_k}]$ is of the form $[p^1, q^m, p^1, r^n]$, where $p, q, r$ are distinct and $p$ is odd.
\end{enumerate}
\end{lemma}

\begin{proof}[Proof of Theorem \ref{thm:inequality}]
Let $d$ be the degree of each vertex in $X$.  Consider the $k$-tuple $(q_1^{m_1}, \dots, q_k^{m_k})$, where $3 \le q_1 < \dots <q_k$ and, for each $i=1, \dots,k$, $q_i = p_j$ for some $j$, $m_i = \sum_{p_j = q_i}n_j$. So, $\sum_{i=1}^k  m_i = \sum_{j=1}^{\ell}n_j = d$ and $\sum_{i=1}^k \frac{m_i}{q_i} = \sum_{j=1}^{\ell}\frac{n_j}{p_j}$. 
Thus, 
\begin{align} \label{eq:2}
2> \sum\limits_{j=1}^{\ell}\frac{n_j(p_j-2)}{p_j} = \sum\limits_{j=1}^{\ell}  n_j - 2\sum\limits_{j=1}^{\ell} \frac{n_j}{p_j} =
\sum\limits_{i=1}^k  m_i - 2\sum\limits_{i=1}^k \frac{m_i}{q_i}  = d -2\sum\limits_{i=1}^k \frac{m_i}{q_i}. 
\end{align}
So, $d-2 < 2 \sum_{i=1}^k \frac{m_i}{q_i} \leq 2\sum_{i=1}^k \frac{m_i}{3}\leq \frac{2d}{3}$. This implies that $3d-6< 2d$ and hence $d<6$.  Therefore, $d = 3, 4$ or $5$.

\medskip

\noindent {\it Case 1:} First assume $d = 5$. If $q_1 \geq 4$ then $\frac{m_1}{q_1} + \dots + \frac{m_k}{q_k} \leq \frac{d}{q_1} \leq \frac{5}{4}$. Therefore, by \eqref{eq:2}, $2 > d -2\sum_{i=1}^k\frac{m_i}{q_i} \geq 5 -  \frac{10}{4} = \frac{10}{4}$, a contradiction. So, $q_1 = 3$. If $m_1 \leq 3$ then $3 = d- 2 < 2(\frac{m_1}{q_1} + \dots + \frac{m_k}{q_k}) \leq 2(\frac{m_1}{q_1} + \frac{d-m_1}{q_2}) \leq 2(\frac{m_1}{3} + \frac{5-m_1}{4}) = \frac{15+m_1}{6} \leq \frac{15+3}{6} =3$, a contradiction. So, $m_1 \geq 4$. Since $m_1 \leq d = 5$, it follows that $m_1 = 4$ or $5$.

\smallskip

\noindent {\it 1.1:} Let $m_1 = 5$. Then, $d = m_1$ and $k = 1$. So,  $(q_1^{m_1}, q_2^{m_2}, \dots, q_k^{m_k})
= (3^5)$ and hence $[p_1^{n_1}, \dots, p_{\ell}^{n_{\ell}}] = [3^5]$.
\smallskip

\noindent {\it 1.2:} Let $m_1 = 4$. Then $m_2 = 1$. Therefore, $3  = d-2 < 2\sum_{i=1}^k\frac{m_i}{q_i} = 2(\frac{m_1}{q_1} +  \frac{m_2}{q_2}) = 2(\frac{4}{3} +  \frac{1}{q_2})$. This implies that ${1}/{q_2} > {3}/{2}-{4}/{3}= {1}/{6}$ and hence  $q_2 < 6$. Since $q_2 > q_1 = 3$, $q_2 = 4$ or $5$.

\smallskip

If $q_2=5$, then $(q_1^{m_1}, \dots, q_k^{m_k}) = (3^4, 5^1)$ and hence $[p_1^{n_1}, \dots, p_{\ell}^{n_{\ell}}] = [3^4, 5^1]$.
Similarly, if $q_2 = 4$ then   $[p_1^{n_1}, \dots, p_{\ell}^{n_{\ell}}] = [3^4, 4^1]$.

\medskip

\noindent {\it Case 2:} Now, assume $d = 4$. Then, $1= \frac{d}{2}-1< \sum_{i=1}^k\frac{m_i}{q_i} \leq \frac{d}{q_1} = \frac{4}{q_1}$. So, $q_1 < 4$ and hence $q_1 = 3$.

\smallskip

\noindent {\it 2.1:} If $m_1 = d = 4$, then $[p_1^{n_1}, \dots, p_{\ell}^{n_{\ell}}] = [3^4]$.

\smallskip

\noindent {\it 2.2:} If $m_1 = 3$, then $m_2 = 1$. So, $(q_1^{m_1},  \dots, q_k^{m_k}) = (3^3, q_2^1)$. This implies that  $[p_1^{n_1}, \dots, p_{\ell}^{n_{\ell}}] = [3^3, s^1]$  for some $s \geq 4$. 

\smallskip

\noindent {\it 2.3:} If $m_1 = 2$, then $1  = \frac{d}{2}-1 < \sum_{i=1}^k\frac{m_i}{q_i} =\frac{2}{3}+\frac{m_2}{q_2}+\frac{m_3}{q_3} \leq \frac{2}{3} + \frac{2}{q_2}$. So,  $\frac{2}{q_2} > \frac{1}{3}$ and hence  $q_2< 6$. Thus, $q_2 = 5$ or 4.

\smallskip

\noindent {\it 2.3.1:} If $q_2 = 5$, then $1  = \frac{d}{2}-1  < \frac{2}{3}+\frac{m_2}{5}+\frac{m_3}{q_3}$ and hence $\frac{m_2}{5}+\frac{m_3}{q_3}  > \frac{1}{3}$,   where $m_2+m_3 = d-m_1 =2$ and $m_2\geq 1$. These imply that $q_3\leq 7$.  If $q_3 = 7$ then  $(q_1^{m_1}, \dots, q_k^{m_k}) = (3^2, 5^1, 7^1)$. This implies that $[p_1^{n_1}, \dots, p_{\ell}^{n_{\ell}}] = [3^2, 5^1, 7^1]$ or $[3^1, 5^1, 3^1, 7^1]$. But  $[p_1^{n_1}, \dots, p_{\ell}^{n_{\ell}}] = [3^2, 5^1, 7^1]$ is not possible by Lemma \ref{DM2020} $(i)$ and $[p_1^{n_1}, \dots, p_{\ell}^{n_{\ell}}] = [3^1, 5^1, 3^1, 7^1]$ is not possible by Lemma \ref{DM2020} $(iii)$. So, $q_3 \neq 7$. If $q_3 = 6$, then $(q_1^{m_1}, \dots, q_k^{m_k}) = (3^2, 5^1, 6^1)$. Again, by Lemma \ref{DM2020} $(i)$ and  $(iii)$, $[3^2, 5^1, 6^1]$ and $[3^1, 5^1, 3^1, 6^1]$ are not vertex-types of any maps. So, $q_3 \neq 6$. Since $q_3 > q_2 =5$, it follows that $m_2 = 2$ (and $q_2 = 5$). Then $(q_1^{m_1}, \dots, q_k^{m_k}) = (3^2, 5^2)$. By Lemma \ref{DM2020} $(i)$, $[p_1^{n_1}, \dots, p_{\ell}^{n_{\ell}}] \neq [3^2, 5^2]$. Therefore, $[p_1^{n_1}, \dots, p_{\ell}^{n_{\ell}}] = [3^1, 5^1, 3^1, 5^1]$.

\smallskip

\noindent {\it 2.3.2:} If $q_2 = 4$, then $1= \frac{d}{2}-1< \frac{2}{3}+\frac{m_2}{4}+\frac{m_3}{q_3}$ and hence $\frac{m_2}{4}+\frac{m_3}{q_3} > \frac{1}{3}$. 
If $m_2 = 1$ then $m_3 = 1$. So, $\frac{1}{4}+\frac{1}{q_3} > \frac{1}{q_3}$ and hence $4 < q_3 < 12$. Therefore, $(q_1^{m_1}, \dots, q_k^{m_k}) = (3^2, 4^1, q_3^1)$ and hence $[p_1^{n_1}, \dots, p_{\ell}^{n_{\ell}}] = [3^2, 4^1, q_3^1]$, where $q_3 > 4$. But this is not possible by Lemma \ref{DM2020} $(i)$. So, $m_2 \neq 1$ and hence $m_2 = 2$. Then  $(q_1^{m_1}, \dots, q_k^{m_k}) = (3^2, 4^2)$. By Lemma \ref{DM2020} $(i)$, $[p_1^{n_1}, \dots, p_{\ell}^{n_{\ell}}] \neq [3^2, 4^2]$. Therefore, $[p_1^{n_1}, \dots, p_{\ell}^{n_{\ell}}] = [3^1, 4^1, 3^1, 4^1]$.

\smallskip

\noindent {\it 2.4:} Let $m_1 = 1$. Then, $1 = \frac{d}{2}-1< \frac{1}{3}+\frac{m_2}{q_2}+\frac{m_3}{q_3}+\frac{m_4}{q_4}$, where $m_2+m_3+m_4= 3$ and $4\leq q_2 < q_3<q_4$. These imply that $q_2=4$. If $m_2 = 1$ then $1 < \frac{1}{3} +\frac{1}{4}+\frac{m_3}{q_3}+\frac{m_4}{q_4}  \leq \frac{7}{12} + \frac{2}{q_3}$. So, $\frac{2}{q_3} > \frac{5}{12}$ and hence $q_3 \leq 4=q_2$, a contradiction. Thus, $m_2 \geq 2$  and hence $m_2 = 2$ or $3$.

\smallskip

If $m_2 = 2$, then $m_3 =1$. So,  $1 = \frac{d}{2}-1< \frac{1}{3}+\frac{2}{4}+\frac{1}{q_3}$ and hence $\frac{1}{q_3} > 1-\frac{1}{3}-\frac{1}{2} = \frac{1}{6}$. Therefore, $q_3 < 6$ and hence $q_3 = 5$. Then,  $(q_1^{m_1}, q_2^{m_2}, \dots, q_k^{m_k}) = (3^1, 4^2, 5^1)$. By Lemma \ref{DM2020} $(ii)$,  $[p_1^{n_1}, \dots, p_{\ell}^{n_{\ell}}] \neq [3^1, 4^2, 5^1]$. Therefore, $[p_1^{n_1}, \dots, p_{\ell}^{n_{\ell}}] = [3^1, 4^1, 5^1, 4^1]$.

\smallskip

If $m_2 = 3$, then  $(q_1^{m_1}, q_2^{m_2}, \dots, q_k^{m_k}) = (3^1, 4^3)$ and hence $[p_1^{n_1}, \dots, p_{\ell}^{n_{\ell}}] = [3^1, 4^3]$.

\medskip  

\noindent {\it Case 3:} Finally, assume $d = 3$. Then, $\frac{1}{2}  = \frac{d}{2}-1 < \frac{m_1}{q_1} + \frac{m_2}{q_2} + \frac{m_3}{q_3}$, where $m_1+m_2+m_3=3$ and $3\leq q_2< q_3<q_4$. This implies that $q_1 < 6$ and hence $q_1 = 3, 4$ or $5$.

\smallskip

\noindent {\it 3.1:} Let $q_1 = 5$. Now, $m_1 = 1, 2$ or $3$. If $m_1 = 2$ then $[p_1^{n_1}, \dots, p_{\ell}^{n_{\ell}}] = [5^2, q_2^1]$, where $q_2>5$. This is not possible by Lemma \ref{DM2020} $(i)$. So, $m_1 = 1$ or $3$.

\smallskip

\noindent {\it 3.1.1:} If $m_1 = 1$, then $\frac{1}{2} <\frac{1}{5} + \frac{m_2}{q_2} + \frac{m_3}{q_3}$. So,   $\frac{m_2}{q_2} + \frac{m_3}{q_3} > \frac{1}{2}-\frac{1}{5} = \frac{3}{10}$, where $m_2+m_3=2$ and $5=q_1<q_2<q_3$. These imply that  $q_2 = 6$. If $m_2 = 1$ then $m_3 = 3-m_1-m_2 = 1$ and hence   $[p_1^{n_1}, \dots, p_{\ell}^{n_{\ell}}] = [5^1, 6^1, q_3^1]$, where $q_3 \geq 7$. But, this is not possible by Lemma \ref{DM2020} $(ii)$. Thus, $m_2=2$. Then $[p_1^{n_1}, \dots, p_{\ell}^{n_{\ell}}]=[5^1, 6^2]$.

\smallskip

\noindent {\it 3.1.2:} If $m_1 = 3$, then  $[p_1^{n_1}, \dots, p_{\ell}^{n_{\ell}}] = [5^3]$.

\smallskip

\noindent {\it 3.2:} Let $q_1 = 4$. Since $d=3$, $(m_1, \dots, m_k) = (1, 1, 1), (1, 2), (2, 1)$ or $(3)$.

\smallskip

\noindent {\it 3.2.1:} If $(m_1, \dots, m_k) = (1, 1, 1)$, then $\frac{1}{2} <\frac{1}{4}+\frac{1}{q_2}+\frac{1}{q_3}$. So, $\frac{1}{q_2}+\frac{1}{q_3} > \frac{1}{4}$. Since $q_2 < q_3$, it follows that $q_2 < 8$. If $q_2 = 5$ then  $[p_1^{n_1}, \dots, p_{\ell}^{n_{\ell}}] = [4^1, 5^1, q_3^1]$, $q_3 > 5$. This is not possible by Lemma \ref{DM2020} $(ii)$. So, $q_2 \neq 5$. Similarly, $q_2 \neq 7$. Thus, $q_2 = 6$. Then $\frac{1}{q_3}> \frac{1}{4}-\frac{1}{6} =\frac{1}{12}$ and hence  $q_3<12$. Then, by the same argument,  $q_3 \neq 9, 11$. So, $q_3 = 8$ or $10$. Therefore,  $[p_1^{n_1}, \dots, p_{\ell}^{n_{\ell}}] = [4^1, 6^1, 8^1]$ or $[4^1, 6^1, 10^1]$.

\smallskip

\noindent {\it 3.2.2:} If $(m_1, \dots, m_k) = (1, 2)$, then $\frac{1}{2} <  \frac{1}{4}+\frac{2}{q_2}$ and hence $4 = q_1 < q_2 < 8$. 
Thus, $[p_1^{n_1},$ $\dots, p_{\ell}^{n_{\ell}}] =[4^1, q_2^2]$, $5\leq q_2\leq 7$. 
By Lemma \ref{DM2020} $(i)$, $q_2 \neq 5$ or $7$.  So,  $[p_1^{n_1}, \dots, p_{\ell}^{n_{\ell}}] = [4^1, 6^2]$.

\smallskip

\noindent {\it 3.2.3:} If $(m_1, \dots, m_k) = (2, 1)$, then $[p_1^{n_1}, \dots, p_{\ell}^{n_{\ell}}] = [4^2, q_2^1]$ for some $q_2 \geq 5$.

\smallskip

\noindent {\it 3.2.4:} If $(m_1, \dots, m_k) = (3)$, then  $[p_1^{n_1}, \dots, p_{\ell}^{n_{\ell}}] = [4^3]$.

\smallskip

\noindent {\it 3.3:} Let $q_1 = 3$. By Lemma \ref{DM2020},  $(m_1, \dots, m_k) = (3)$ or $(1, 2)$.

\smallskip

\noindent {\it 3.3.1:} If $(m_1, \dots, m_k) = (3)$, then  $[p_1^{n_1}, \dots, p_{\ell}^{n_{\ell}}] = [q_1^3]= [3^3]$.

\smallskip

\noindent {\it 3.3.2:} If $(m_1, \dots, m_k) = (1, 2)$, then $\frac{1}{2} < \frac{1}{3} + \frac{2}{q_2}$. So,   $q_2 < 12$. Again, by Lemma \ref{DM2020} $(i)$, $q_2$ is not odd. So, $q_2 = 4, 6, 8$ or $10$. Therefore,  $[p_1^{n_1}, \dots, p_{\ell}^{n_{\ell}}] = [3^1, 4^2]$, $[3^1, 6^2]$, $[3^1, 8^2]$ or $[3^1, 10^2]$.
This completes the proof.
\end{proof}

As a consequence we prove 

\begin{corollary} \label{cor6}
Suppose there exists an $n$-vertex  semi-equivelar spherical map of vertex-type  $[p_1^{n_1}, \dots, p_{\ell}^{n_{\ell}}]$.  Then $(n, [p_1^{n_1}, \dots, p_{\ell}^{n_{\ell}}])  = (4, [3^3]), (6, [3^4]), (8, [4^3]), (12, [3^5]), (20, [5^3])$, $(60, [3^4, 5^1]),  (24,[3^4, 4^1]), (30, [3^1, 5^1, 3^1, 5^1]), (12, [3^1, 4^1, 3^1, 4^1])$, $(60, [3^1, 4^1, 5^1, 4^1])$, $(24, [3^1$, $4^3])$, $(60, [5^1$, $6^2]),$ $(48, [4^1, 6^1, 8^1]),$ $(120, [4^1$, $6^1, 10^1]),$ $(24, [4^1, 6^2]),$ $(12, [3^1, 6^2]),$ $(24,$ $ [3^1,$ $ 8^2]),$ $(60, [3^1, 10^2])$, $(6, [3^1, 4^2])$, $(2r, [4^2, r^1])$ for some $r \geq 5$ or $(2s, [3^3, s^1])$ for some $s \geq 4$.
\end{corollary}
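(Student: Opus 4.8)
The plan is to pair a double-counting argument with Euler's formula $V - E + F = 2$ for $\mathbb{S}^2$, and then read off $n$ case by case from the list $\mathcal{A}$ provided by Theorem~\ref{thm:inequality}.

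First I would record the combinatorial counts. Write $V = n$ and pass to the condensed vertex-type $(q_1^{m_1}, \dots, q_k^{m_k})$ used in the proof of Theorem~\ref{thm:inequality}, so that the $q_i$ are the distinct face sizes and $m_i = \sum_{p_j = q_i} n_j$. Each vertex then has degree $d = \sum_{i} m_i = \sum_{j} n_j$, whence $2E = nd$. Since semi-equivelarity forces every vertex to lie on exactly $m_i$ faces of size $q_i$, counting vertex--face incidences among the $q_i$-gons gives $q_i F_{q_i} = n m_i$, where $F_{q_i}$ is the number of $q_i$-gonal faces; here polyhedrality ensures that each $q_i$-gon contributes exactly $q_i$ distinct vertices, so this count is exact. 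Hence $F_{q_i} = n m_i / q_i$ and $F = \sum_i F_{q_i} = n \sum_i m_i / q_i$.

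Next I would substitute these into Euler's formula, obtaining $n\bigl(1 - \tfrac{d}{2} + \sum_i \tfrac{m_i}{q_i}\bigr) = 2$. The crucial simplification is to invoke the identity already established in \eqref{eq:2}, namely $S := \sum_j \tfrac{n_j(p_j - 2)}{p_j} = d - 2\sum_i \tfrac{m_i}{q_i}$, which rewrites $\sum_i \tfrac{m_i}{q_i} = (d - S)/2$. The degree terms then cancel and Euler's formula collapses to the clean relation $n(2 - S) = 4$, i.e. $n = 4/(2 - S)$. In particular $S < 2$, so Theorem~\ref{thm:inequality} applies and the vertex-type must lie in $\mathcal{A}$.

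Finally I would run through the nineteen fixed tuples and the two infinite families $[4^2, r^1]$, $[3^3, s^1]$ comprising $\mathcal{A}$, computing $S$ and hence $n = 4/(2 - S)$ in each case. For example $[3^3]$ gives $S = 1$ and $n = 4$; $[4^3]$ gives $S = 3/2$ and $n = 8$; $[5^3]$ gives $S = 9/5$ and $n = 20$; the prism $[4^2, r^1]$ gives $S = 2 - 2/r$ and hence $n = 2r$; and the antiprism $[3^3, s^1]$ likewise gives $n = 2s$. Matching each value against the asserted list completes the proof. There is no genuine obstacle once Euler's formula collapses; the only care needed is the exactness of the incidence count $q_i F_{q_i} = n m_i$ and the routine arithmetic across the twenty-one cases.
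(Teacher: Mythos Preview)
Your proof is correct and follows essentially the same route as the paper: both use the incidence count $q_iF_{q_i}=nm_i$ together with Euler's formula to derive $n = 4\bigl(2 - \sum_j n_j(p_j-2)/p_j\bigr)^{-1}$, conclude that $S<2$, invoke Theorem~\ref{thm:inequality} to force the vertex-type into $\mathcal{A}$, and then evaluate $n$ for each tuple. The only cosmetic difference is that you route the simplification explicitly through the identity in~\eqref{eq:2}, whereas the paper writes the same cancellation in one line.
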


\begin{proof}
Let $X$ be an $n$-vertex  semi-equivelar map on $\mathbb{S}^2$ of vertex-type  $[p_1^{n_1}, \dots, p_{\ell}^{n_{\ell}}]$.  Let $f_1, f_2$ be the number of edges and faces of $X$ respectively. Let $d$ be the degree of each vertex. So, $f_1 = (nd)/2$. Consider the $k$-tuple $(q_1^{m_1}, \dots, q_k^{m_k})$, where $3 \le q_1 < \dots <q_k$ and, for each $i=1, \dots,k$, $q_i = p_j$ for some $j$, $m_i = \sum_{p_j = q_i}n_j$. So, $\sum_i m_i = \sum_{j}n_j = d$ and $\sum_i \frac{m_i}{q_i} = \sum_{j}\frac{n_j}{p_j}$. 
A two-way counting the number of ordered pairs $(F, v)$, where $F$ is a $q_i$-gons in $X$ and $v$ is a vertex of $F$, we get: (the number of $q_i$-gons) $\times q_i = n \times m_i$. This implies  that $f_2 = n \times \sum_{i=1}^k\frac{m_i}{q_i} = n\times\sum_{j=1}^{\ell}\frac{n_j}{p_j}$. Since the Euler characteristic of $\mathbb{S}^2$ is $2$, we get 
\begin{align} \label{eq3}
2 & =n-f_1+f_2= n \times (1- \frac{1}{2}\sum_{j=1}^{\ell}n_j +\sum_{j=1}^{\ell}\frac{n_j}{p_j})  = \frac{n}{2} \times (2 - \sum_{j=1}^{\ell}\frac{n_j(p_j-2)}{p_j}). 
\end{align} 
Thus,
\begin{align} \label{eq4}
n = 4\left(2 - \sum_{j=1}^{\ell}\frac{n_j(p_j-2)}{p_j}\right)^{-1}. 
\end{align} 
From \eqref{eq3}, we get  $\sum_{j=1}^{\ell}\frac{n_j(p_j-2)}{p_j} = 2-\frac{4}{n} <2$. 
Therefore, by Theorem \ref{thm:inequality}, $[p_1^{n_1}, \dots, p_{\ell}^{n_{\ell}}]\in {\mathcal A}$. The result now follows from 
\eqref{eq4} and the set ${\mathcal A}$ given in \eqref{a-sum<2}.  (For example, if $[p_1^{n_1}, \dots, p_{\ell}^{n_{\ell}}] = [3^4,5^1]$ then $n = 4(2-(\frac{4(3-2)}{3} + \frac{1(5-2)}{5}))^{-1}= 60$. So, $(n, [p_1^{n_1}, \dots, p_{\ell}^{n_{\ell}}]) = (60, [3^4,5^1])$.) 
\end{proof}


\section{Proofs of main results} \label{sec:proofs-2}

In this sections, we present proofs of Theorems \ref{thm:s2}, \ref{thm:rp2} and Corollaries \ref{cor:s2vt}, \ref{cor:tiling}, \ref{cor:rp2vt}

\begin{lemma}\label{lem3.4}
Let $K$ be a semi-equivelar spherical map. If the number of vertices and the vertex-type of $K$ are same as those of the boundary $\partial P$ of a Platonic solid $P$ then $K \cong \partial P$.
\end{lemma}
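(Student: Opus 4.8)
The plan is to exploit that every Platonic vertex-type is homogeneous, of the form $[p^q]$ — namely $[3^3]$, $[3^4]$, $[3^5]$, $[4^3]$, $[5^3]$ — so that in both $K$ and $\partial P$ every face is a $p$-gon and every vertex has degree $q$ with exactly $q$ faces around it. By Corollary \ref{cor6} (equivalently, by the Euler relation \eqref{eq3}), the hypothesis that $K$ and $\partial P$ have the same number of vertices forces them to have the same number of edges and the same number of faces as well. Consequently, any incidence-preserving injection of vertices I manage to build will automatically be an isomorphism of maps.

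I would construct such a map by a developing, layer-by-layer reconstruction. Fix an oriented flag in $K$ — a face $F_0$ together with a prescribed cyclic labelling $v_1, \dots, v_p$ of its vertices — and an oriented flag $F_0' = v_1'\cdots v_p'$ in $\partial P$, and begin the candidate isomorphism $\phi$ by setting $\phi(v_i) = v_i'$. The crucial local rigidity is this: once $F_0$ and its labelling are fixed, the face-cycle condition determines, at each $v_i$, the entire cyclic list of the $q$ faces incident to $v_i$, and hence determines every face sharing an edge or a vertex with $F_0$; the identical computation in $\partial P$ determines the corresponding neighbours of $F_0'$. Semi-equivelarity guarantees that these two local pictures are combinatorially identical, so $\phi$ extends uniquely to the ring of faces surrounding $F_0$. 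Iterating — passing from each already-matched face across each of its edges, and completing each vertex-star to its full $q$-cycle of $p$-gons — extends $\phi$ outward layer by layer.

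The step I expect to be the main obstacle is showing that this extension is \emph{well defined}: a given face of $K$ may be reached from $F_0$ along several chains of edge-adjacencies, and I must verify that all of them assign it the same image. This is precisely a monodromy question, and it is settled by the simple-connectivity of $\mathbb{S}^2$. Any two chains of faces joining $F_0$ to a given face differ by a sequence of elementary moves, each pushing the chain across a single vertex-star; across a vertex-star the two determinations of $\phi$ agree because the local structure there is a single $q$-cycle of $p$-gons, reconstructed identically in $K$ and in $\partial P$. Hence $\phi$ is globally single-valued.

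Finally, I would verify that $\phi$ is a bijection. The reconstruction visits every face, since the dual graph of a connected map is connected, and $\phi$ is locally injective, being a bijection on each vertex-star; because $K$ and $\partial P$ have equal, finite numbers of vertices and of faces (by the first paragraph), local injectivity forces $\phi$ to be a global bijection on vertices and on faces. Being incidence-preserving by construction, $\phi$ is then an isomorphism $K \cong \partial P$. One could alternatively dispatch the five cases by hand — for instance, a $4$-vertex degree-$3$ triangulation must be $K_4$, and a $6$-vertex degree-$4$ triangulation must be the octahedron $K_{2,2,2}$ — but the developing argument has the advantage of treating all five Platonic types uniformly.
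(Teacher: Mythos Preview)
Your argument is correct and proceeds quite differently from the paper's. The paper treats the five Platonic types case by case: for $[3^3]$, $[3^4]$, $[3^5]$ it asserts uniqueness of the corresponding small triangulation of $\mathbb{S}^2$ (invoking outside references for the octahedron and icosahedron), and then obtains the cube and dodecahedron by passing to the dual maps of those two. Your developing-map argument is uniform and self-contained: it uses only the homogeneity of the type $[p^q]$ and the simple-connectivity of $\mathbb{S}^2$, handling all five solids at once without external citations; the paper's approach is shorter to read but relies on enumeration results proved elsewhere. One phrase in your last paragraph deserves tightening: ``local injectivity plus equal finite cardinality'' does not by itself force a bijection. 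What you actually have is that $\phi$ is a bijection on every vertex-star, hence a combinatorial covering of $\partial P$; since the base is simply connected --- or, equivalently, since the fibre size of a covering is locally constant and the cell counts agree --- the degree is one and $\phi$ is an isomorphism.
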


\begin{proof}
If $P$ is the tetrahedron, then $K$ is a $4$-vertex triangulation of $\mathbb{S}^2$ and it is trivial to see that $K$ is unique up to isomorphism.

If $P$ is the octahedron, then $K$ is a $6$-vertex triangulation of $\mathbb{S}^2$ and degree of each vertex is $4$. It is easy to see that $K$ is unique up to an isomorphism (cf. \cite{Da1999}). This also implies that the dual map of $K$ which has $8$ vertices and is of vertex-type $[4^3]$ is unique  up to an isomorphism. Hence an 8-vertex map of vertex-type $[4^3]$ on $\mathbb{S}^2$ is isomorphic to the boundary of the cube.

If $P$ is the icosahedron, then $K$ is a $12$-vertex triangulation of $\mathbb{S}^2$ and degree of each vertex is $5$. We know that it is unique up to an isomorphism (cf. \cite[Table 8]{SL2009}, \cite[Lemma 1]{Up2009}). This also implies that the dual map of $K$ which has $20$ vertices and is of vertex-type $[5^3]$ is unique up to an isomorphism.
Hence a 20-vertex map of vertex-type  $[5^3]$ on $\mathbb{S}^2$ is isomorphic to the boundary of the dodecahedron.
 \end{proof}

We need Lemmas \ref{lem3.5}, \ref{lem3.8} and  \ref{lem3.10} to prove Lemma \ref{lem3.12}.

\begin{lemma}\label{lem3.5}
Let $X$ be a semi-equivelar spherical map of vertex-type $[p^1, q^2]$, where $q \geq 6$. If $\alpha, \beta$ are two $p$-gonal faces of $X$ then there exists at most one edge from $\alpha$ to $\beta$.
\end{lemma}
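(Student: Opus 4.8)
The plan is first to read off, from the face-cycle $p\text{-}q\text{-}q$ forced by the vertex-type, the local combinatorics of $X$. Every vertex has degree $3$ and lies in exactly one $p$-gon and two $q$-gons, the two $q$-gons being adjacent across a single edge at that vertex; call such edges \emph{$q$--$q$ edges} and the remaining edges (the edges of the $p$-gons) \emph{$p$--$q$ edges}. Hence the $p$-gons are pairwise vertex-disjoint and partition $V(X)$, and the $q$--$q$ edges form a perfect matching $M$ on $V(X)$, one at each vertex. Because $X$ is a polyhedral map, two faces meet in at most one vertex or one edge; since every edge of a $p$-gon is a $p$--$q$ edge, two distinct $p$-gons share no edge, so any edge with one endpoint in $\alpha$ and the other in $\beta$ must belong to $M$. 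Moreover the edges of each $q$-gon alternate between $p$--$q$ edges and $q$--$q$ edges (so $q$ is even), and no face meets another in two edges; in particular the $p$-gons bordering a fixed $q$-gon are distinct, and so are the $q$-gons around a fixed $p$-gon. Thus the claim reduces to: \emph{at most one matching edge of $M$ joins $\alpha$ to $\beta$.}

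The key local observation is that for a matching edge $ab$ with $a\in\alpha$ and $b\in\beta$, the two $q$-gons containing $ab$ each border \emph{both} $\alpha$ and $\beta$: at $a$ they occupy the two $p$--$q$ edges of $\alpha$ through $a$, and at $b$ the two $p$--$q$ edges of $\beta$ through $b$. Suppose now, for a contradiction, that two matching edges $ab$ and $a'b'$ join $\alpha$ to $\beta$. If $a$ and $a'$ are adjacent on $\alpha$, then the unique $q$-gon $Q$ on the $p$--$q$ edge $aa'$ has $ab$ and $a'b'$ as its two $q$--$q$ edges at $a$ and $a'$; hence $Q$ contains both $b,b'\in\beta$ and borders $\beta$ along a $p$--$q$ edge at $b$ and along a second one at $b'$, two distinct common edges of $Q$ and $\beta$ — contradicting polyhedrality. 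The symmetric statement holds if $b,b'$ are adjacent on $\beta$. In particular this settles the case $p=3$, since two of the three vertices of $\alpha$ are always adjacent.

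For $p\ge 4$ the endpoints on $\alpha$ (and on $\beta$) may be non-adjacent, and no single $q$-gon sees both matching edges; resolving this is the main obstacle, and I would do it globally by contracting each $p$-gon to a point. The resulting object $G^{\ast}$ on $\mathbb{S}^2$ has the $p$-gons as vertices (each of degree $p$, one per matching end) and the matching edges as edges; each $q$-gon, whose boundary alternates $p$--$q$ edges and $q$--$q$ edges, collapses to a face bounded by its $q/2$ matching edges. Since $q\ge 6$, each such face is a genuine $(q/2)$-gon with $q/2\ge 3$ sides, so $G^{\ast}$ is an equivelar map on $\mathbb{S}^2$ of vertex-type $[(q/2)^{p}]$. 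A double edge of $G^{\ast}$ between $\alpha$ and $\beta$ is exactly a pair of matching edges joining $\alpha$ to $\beta$, so it suffices to prove $G^{\ast}$ is simple.

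To finish, Euler's formula pins the number of vertices of $G^{\ast}$ to $4$, $8$, $6$, $20$ or $12$ according as $(q/2,p)$ is $(3,3)$, $(4,3)$, $(3,4)$, $(5,3)$ or $(3,5)$ — that is, to the vertex number of the Platonic solid $P$ of Schl\"afli type $\{q/2,p\}$. An equivelar spherical map of this type is unique (this is the content of Lemma~\ref{lem3.4} together with duality, and classically it is the corresponding Platonic solid), so $G^{\ast}\cong\partial P$, which is a simple graph; hence $\alpha$ and $\beta$ are joined by at most one edge. The one delicate point I expect to dwell on is legitimizing this last step: one must check that $G^{\ast}$ is a bona fide polyhedral map — equivalently, that no two $q$-gons of $X$ border two common $p$-gons without sharing a matching edge — so that the uniqueness theorem applies. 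I would verify this directly from the alternating edge-pattern of the $q$-gons, using $q\ge 6$ to exclude the short boundary returns such a double incidence would require; alternatively one avoids $G^{\ast}$ altogether by an innermost-region argument on the annulus $\mathbb{S}^2\setminus(\alpha^{\circ}\cup\beta^{\circ})$, where the matching edges joining $\alpha$ to $\beta$ cut the annulus into quadrilateral regions and one forces the adjacent-endpoint contradiction of the second paragraph inside a thinnest region.
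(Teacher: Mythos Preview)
Your local setup and the adjacent-endpoints contradiction are correct and coincide with the paper's opening moves; in particular your argument settles $p=3$ exactly as the paper does. The gap is in your treatment of $p\ge 4$. The contracted object $G^{\ast}$ is precisely the map $[K]$ that the paper constructs \emph{later}, in Lemma~\ref{lem3.12} --- but there the paper first invokes the present Lemma~\ref{lem3.5} to know that $G^{\ast}$ is simple, and only afterwards applies Lemma~\ref{lem3.4}. You are running this implication backwards, and it is circular: Lemma~\ref{lem3.4} requires its input to be a semi-equivelar (hence polyhedral) map, and a double edge $e_1,e_2$ in $G^{\ast}$ already violates polyhedrality, since the two $(q/2)$-gonal faces bordering $e_1$ and $e_2$ from the same side of the bigon are distinct (as $q/2\ge 3$ forbids a bigonal face) and meet in the two vertices $\alpha,\beta$ without sharing either $e_i$. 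Thus ``$G^{\ast}$ is simple'' is simultaneously what you want to conclude and what you must assume in order to invoke the uniqueness lemma. The ``delicate point'' you flag concerns only the face-intersection condition in $G^{\ast}$; the edge-simplicity condition --- which is literally the statement of Lemma~\ref{lem3.5} --- is the one you have left unaddressed.

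Your sketched fallback, an innermost-region argument forcing adjacent endpoints, does not close the gap as stated: with only two matching edges to $\beta$ and $p\in\{4,5\}$, the shorter arc of $\partial\alpha$ between their endpoints can have length~$2$, so no region forces the adjacency needed for your second-paragraph contradiction. The paper's actual argument for $p=5$ (so $q=6$, $n=60$, twelve pentagons) is a direct count on the two disks $D_1,D_2$ cut from $\mathbb{S}^2$ by $\alpha,\beta$ and the two matching edges. If $\ell$ pentagons lie inside $D_1$ and $m\le 4$ boundary vertices send matching edges inward, then (choosing the double link innermost, so no further double links occur among the $\ell$ interior pentagons) one gets $(5\ell-m)/2\le\binom{\ell}{2}$, forcing $\ell\ge 6$; symmetrically for $D_2$, whence at least $6+6+2=14$ pentagons, contradicting~$12$. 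The case $p=4$ is handled the same way. You should replace the appeal to Lemma~\ref{lem3.4} by this counting.
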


\begin{proof} Since the degree of each vertex is $3$, for each $p$-gonal face $\alpha$ of $X$ and $u \in \alpha$, there exists a unique edge of the form $uv$ where $v \not\in \alpha$ and  $v$ is in another $p$-gon. Consider the graph $G$ whose nodes are $p$-gonal faces of $X$. Two such nodes $\alpha, \beta$ form a link in $G$ if there exists an edge $uv$ in $X$, where $u \in \alpha$ and $v \in \beta$. It is sufficient to show that $G$ is simple.

Suppose $X$ has $n$ vertices. Then, by Corollary \ref{cor6}, $(n, [p^1, q^2]) = (12, [3^1, 6^2]), (24, [3^1, 8^2]),$ $(60,$ $ [3^1, 10^2]), (24, [4^1, 6^2])$ or $(60, [5^1, 6^2])$.  Since $X$ is a polyhedral map, $G$ has no loop. Suppose there is a pair of links between two nodes $\alpha$ and $\beta$ of $G$. Then there exist  $u, v \in \alpha$ and $x, y \in \beta$ such that $e=ux$, $f=vy$ are edges in $X$. Suppose $uv$ is an edge (of $X$). Let $\gamma$ be the $q$-gonal face containing $uv$. Since the degree of each vertex is $3$, it follows that $ux$ $\&$ $vy$ are edges of $\gamma$ and hence $xy$ is a diagonal of $\gamma$. This is not possible since $x, y \in \beta$. So, $uv$ is not an edge. Similarly, $xy$ is a non-edge. Hence $p > 3$.

Let $p =5$. Then $n = 60$ and $q=6$. Since two $p$-gons in $X$ are disjoint, it follows that $G$ has $12$ nodes.  By above $uv, xy$ are non-edges, and there is no more edges between $\alpha$ $ \&$ $\beta$.  Then $\alpha, \beta$ and the edges $e$, $f$ subdivide $\mathbb{S}^2$ into two disks $C, D$ (and interiors of $\alpha$ $\&$  $\beta$). 

\medskip 

\noindent {\bf Claim.} Number of pentagons inside $D$ is at least 6. 

\smallskip

Let ${\mathcal P}$ be the set of pentagons inside $D$. The boundary of $D$ contains $u, v, x, y$ and $m$ (say) vertices, where $2 \leq m \leq 4$. For each of the $m$ vertices, $w$, there exists an edge $wz$ and a pentagon $\gamma$ containing $z$. Then $\gamma$ is inside $D$. So, ${\mathcal P}\neq \emptyset$. Let the number of pentagons in ${\mathcal P}$ be $\ell$. Then,  the numbers of links between $\ell$ nodes is $(5\ell-m)/2$. Since the number of links between any two nodes is at most 2, $\frac{5\ell-m}{2} \leq 2\binom{\ell}{2}$. Thus, $2\ell(\ell-1) \geq 5\ell -m\geq 5\ell -4$ and hence $2\ell^2-7\ell+4\geq 0$. This implies that $\ell\geq 3$. First consider the case where the induced subgraph $G[{\mathcal P}]$ is simple. Then, $\frac{5\ell-m}{2} \leq \binom{\ell}{2}$. Thus, $\ell(\ell-1) \geq 5\ell -m\geq 5\ell -4$ and hence $\ell^2-6\ell+4\geq 0$. This implies that $\ell\geq 6$ and we are done. 

Now, assume that $G[{\mathcal P}]$ is not simple. Suppose there is a pair of links between two nodes $\alpha_1$ and $\beta_1$ of $G[{\mathcal P}]$. Then, as before, there exist edges $e_1=u_1x_1$, $f_1 = v_1y_1$, where $u_1, v_1\in \alpha_1$, $x_1, y_1\in \beta_1$, such that $\alpha_1$, $\beta_1$, $e_1$ and $f_1$  subdivide $\mathbb{S}^{\hspace{.2mm}2}$ into two disks $C_1, D_1$ (and interiors of $\alpha_1$  $\&$ $\beta_1$). Then $\alpha, \beta$ are inside one of $C_1$, $D_1$. Assume, without loss of generality, that $\alpha, \beta$ are inside $C_1$. Then $D_1$ is inside $D$. Let ${\mathcal P}_1$ be the set of pentagons inside $D_1$ and $\ell_1=\#({\mathcal P}_1)$. Again, if $G[{\mathcal P}_1]$ is simple then, by the same arguments as above, $\ell_1\geq 6$. This implies that   the number of pentagons inside $D$ is at least 8 and we are done. So, assume that $G[{\mathcal P}_1]$ is not simple. Then, repeating the same arguments, we get a  pair of links between two nodes $\alpha_2$ $\&$ $\beta_2$ in  $G[{\mathcal P}_1]$ and a disk  $D_2$ bounded by $\alpha_2, \beta_2$ and two edges of the form $e_2=u_2x_2$, $f_2=v_2y_2$, where $u_2, v_2\in \alpha_2$, $x_2, y_2\in\beta_2$, inside $D_1$. Let $\ell_2$ be the number of pentagons inside $D_2$. 
Then, as before, $\ell_2\geq 3$ and hence the number of pentagons inside $D$ is at least $\#(\{\alpha_1, \beta_1, \alpha_2, \beta_2\}) + 3 =7$. This proves the claim.

Similarly, the number of pentagons inside $C$ is at least $6$. Therefore, the number of pentagons in $X$ is $\geq 6+6+2=14$, a contradiction. Thus, $p\neq 5$.  By similar arguments $p \neq 4$. This is a contradiction. So, $G$ is a simple graph. This completes the proof.
\end{proof}

We now present some combinatorial versions of truncation and rectification of polytopes.

\begin{definition}\label{dfn1}
{\rm Let $P$ be a $3$-polytope and $TP$ be the truncation of $P$. Let $X \cong \partial P$ and $V(X) = \{u_1, \dots, u_n\}$. Without loss of generality, we identify $X$ with $\partial P$. Consider a new set (of nodes) $V := \{v_{ij} \colon u_iu_j$ is an edge of $X\}$. So, if $v_{ij} \in V$ then $i \neq j$ and $v_{ji}$ is also in $V$.  Let $E := \{v_{ij}v_{ji} \colon v_{ij} \in V\} \sqcup \{v_{ij}v_{ik} \colon u_j, u_k$  are in a face containing $u_i, 1\le i \le n\}$. Then $(V, E)$ is a graph on $X$. Clearly, from the construction, $(V, E) \cong $ the edge graph of $TP$. Thus, $(V, E)$ gives a map $T(X)$ on $\mathbb{S}^2$. This map $T(X)$ is said to be the} truncation {\rm of $X$}.
\end{definition}

From Definition \ref{dfn1} $\&$ the (geometric) construction of truncation of polytopes we get

\begin{lemma}\label{lem3.8}
Let $X$,  $T(X)$ and $TP$ be as in Definition $\ref{dfn1}$. Then, $T(X)$ is isomorphic to the boundary of $TP$. Moreover, if $X$ is semi-equivelar of vertex-type  $[q^p]$ $($resp., $[p^1, q^1, p^1, q^1])$,  then $T(X)$ is also semi-equivelar and of vertex-type  $[p^1, (2q)^2]$ $($resp., $[4^1, (2p)^1, (2q)^1])$.
\end{lemma}

\begin{proof} Let $P, V(X), V, E$ be as in Definition \ref{dfn1}. Then, from the definition of truncated polytope and the construction in Def. \ref{dfn1}, $T(X) \cong \partial(TP)$.

Let  $X$ be semi-equivelar of $[q^p]$. From the construction in Def. \ref{dfn1}, the set of faces of $T(X)$ is $\{\tilde{\alpha} = v_{i_1i_2}\mbox{-}v_{i_2i_1}\mbox{-}v_{i_2i_3}\mbox{-}v_{i_3i_2}\mbox{-}v_{i_3i_4}\mbox{-}\cdots\mbox{-}v_{i_qi_1}\mbox{-}v_{i_1i_q}\mbox{-}v_{i_1i_2} \colon \alpha = u_{i_1}\mbox{-}u_{i_2}\mbox{-}\cdots\mbox{-}u_{i_q}\mbox{-}u_{i_1}$ is a face of $X\} \sqcup \{\tilde{u_i} = v_{ij_1}\mbox{-}v_{ij_2}\mbox{-}\cdots\mbox{-}v_{ij_q}\mbox{-}v_{ij_1} \colon u_{j_1}\mbox{-}P_1\mbox{-}u_{j_2}\mbox{-}P_2\mbox{-}u_{j_3}\mbox{-}\cdots\mbox{-}P_{q}\mbox{-}u_{j_1}$ is the link\mbox{-}cycle of $u_i\in V(X)\}$.
Thus, the faces incident to the vertex $v_{ij}$ are the $p$-gonal face $\tilde{u_i}$ and the two $2q$-gonal faces $\tilde{\alpha}$ $\&$ $\tilde{\beta}$ where  $\alpha$ $\&$ $\beta$ are faces in $X$ containing the edge $u_iu_j$. Observe that the face-cycle of $v_{ij}$ is $\tilde{u_i}\mbox{-}\tilde{\alpha}\mbox{-}\tilde{\beta}\mbox{-}\tilde{u_i}$. Thus, $T(X)$ is semi-equivelar and the vertex-type  is $[p^1, (2q)^2]$.

Let the vertex-type of $X$ be $[p^1, q^1, p^1, q^1]$. From Def. \ref{dfn1}, the set of faces of $T(X)$ is $\{\tilde{\alpha} := v_{i_1i_2}\mbox{-}v_{i_2i_1}\mbox{-}v_{i_2i_3}\mbox{-}v_{i_3i_2}\mbox{-}v_{i_3i_4}\mbox{-}\cdots\mbox{-}v_{i_ri_1}\mbox{-}v_{i_1i_r}\mbox{-}v_{i_1i_2} \colon \alpha = u_{i_1}\mbox{-}\cdots\mbox{-}u_{i_r}\mbox{-}u_{i_1}$ is a $r$-gonal face of $X, \ r = p, q\}\sqcup  \{\tilde{u_i} := v_{it_1}\mbox{-}v_{it_2} \mbox{-}v_{it_3}\mbox{-}v_{it_4}\mbox{-}v_{it_1} \colon u_{t_1}\mbox{-}P_1\mbox{-}u_{t_2}\mbox{-}P_2 \mbox{-}u_{t_3}\mbox{-}P_{3}\mbox{-}u_{t_4}\mbox{-}P_{4}$ $\mbox{-}u_{t_1}$ is the link\mbox{-}cycle of  $u_i\in V(X)\}$.Thus, the faces incident to the vertex $v_{ij}$ are the square $\tilde{u_i}$, the $2p$-gonal face $\tilde{\alpha}$ and the $2q$-gonal face $\tilde{\beta}$ where $\alpha$ is a $p$-gonal face and $\beta$ is a $q$-gonal face in $X$ containing the edge $u_iu_j$. Observe that the face-cycle of $v_{ij}$ is $\tilde{u_i}\mbox{-}\tilde{\alpha}\mbox{-}\tilde{\beta}\mbox{-}\tilde{u_i}$. Thus, $T(X)$ is semi-equivelar and the vertex-type  is $[4^1, (2p)^1, (2q)^1]$.
\end{proof}

\begin{definition}\label{dfn2}
{\rm Let $P$ be a polytope and $RP$ be the rectification of $P$. Let $X \cong \partial P$ and  $V(X) = \{u_1, \dots, u_n\}$. Without loss of generality, we identify $X$ with $\partial P$. Consider the graph $(V, E)$, where $V$ is the edge set $E(X)$ of $X$ and $E := \{ef \colon e, f$ are two adjacent edges in a face of $X\}$. Then $(V, E)$ is a graph on $X$. From the definition of rectification, it follows that $(V, E) \cong $ the edge graph of $RP$. Thus, $(V, E)$ gives a map say $R(X)$ on $\mathbb{S}^2$, which is said to be the} rectification {\rm of $X$}.
\end{definition}

From Definition \ref{dfn2} $\&$ the (geometric) construction of rectification of polytopes we get

\begin{lemma}\label{lem3.10}
Let $X$, $R(X)$ and $RP$ be as in Definition $\ref{dfn2}$. Then, $R(X)$ is isomorphic to the boundary of $RP$. Moreover, if $X$ is semi-equivelar of vertex-type  $[q^p]$ $($respectively, $[p^1, q^1, p^1, q^1])$,  then $R(X)$ is also semi-equivelar and of vertex-type $[p^1, q^1, p^1, q^1]$ $($respectively, $[4^1, p^1, 4^1, q^1])$.
\end{lemma}


\begin{proof} Let $P, V(X), V, E$ be as in Definition \ref{dfn2}. Then, from the definition of rectified polytope and the  construction in Def. \ref{dfn2}, $R(X) \cong \partial(RP)$.

Let  $X$ be semi-equivelar of vertex-type  $[q^p]$. From the construction in Def. \ref{dfn2}, the set of faces of $R(X)$ is $\{\tilde{\alpha} := e_1 \mbox{-} e_2 \mbox{-} \cdots  \mbox{-} e_q\mbox{-} e_1 \colon \alpha = (e_1 \cap e_2) \mbox{-}\cdots \mbox{-}(e_{q-1}  \cap e_q) \mbox{-} (e_q \cap e_1) \mbox{-}( e_1 \cap e_2)$ is a face of $X\} \sqcup \{\tilde{u_i} := \epsilon_1\mbox{-}\cdots\mbox{-}\epsilon_{p}\mbox{-}\epsilon_{1} \colon u_{j_1}\mbox{-}P_1\mbox{-}u_{j_2}\mbox{-}P_2 \mbox{-} u_{j_3} \mbox{-} \cdots \mbox{-} P_{p} \mbox{-} u_{j_1}$ is the link\mbox{-}cycle of  $u_i\in V(X)$ and $\epsilon_{\ell} = u_iu_{j_{\ell}}, 1 \leq \ell \leq p\}$. The faces incident to the vertex $e=u_iu_j$ are the two squares $\tilde{u_i}$, $\tilde{u_j}$ and the two $q$-gonal faces $\tilde{\alpha}$, $\tilde{\beta}$ where $\alpha$, $\beta$ are faces in $X$ containing the edge $u_iu_j$. Observe that the face-cycle of $e$ ($=u_iu_j = \alpha \cap \beta$) is $\tilde{u_i}\mbox{-}\tilde{\alpha}\mbox{-}\tilde{u_j}\mbox{-}\tilde{\beta}\mbox{-}\tilde{u_i}$. Thus, $R(X)$ is semi-equivelar and the vertex-type is $[p^1, q^1, p^1, q^1]$.

Let $X$ be semi-equivelar of vertex-type   $[p^1, q^1, p^1, q^1]$. From Def. \ref{dfn2}, the set of faces of $R(X)$ is $\{\tilde{\alpha} := e_1\mbox{-}\cdots\mbox{-}e_r\mbox{-}e_1 \colon \alpha = (e_1 \cap e_2) \mbox{-} \mbox{-}\cdots \mbox{-}(e_{r-1} \cap e_r)\mbox{-}(e_r \cap e_1)\mbox{-}(e_1 \cap e_2)$ is a $r$-gonal face of $X, \ r = p, q\}\sqcup \{\tilde{u_i} = e_{1}\mbox{-}e_{2}\mbox{-}e_{3}\mbox{-}e_{4}\mbox{-}e_{1} \colon u_{j_1}\mbox{-}P_1\mbox{-}u_{j_2}\mbox{-}P_2\mbox{-}u_{j_3}\mbox{-}P_{3}\mbox{-}u_{j_4}\mbox{-}P_{4}\mbox{-}u_{j_1}$ is the link-cycle of  $u_i\in  V(X)$ and $e_{\ell} = u_iu_{j_{\ell}}, 1 \leq \ell \leq 4\}$.The faces incident to the vertex $e=u_iu_j$ are the two squares $\tilde{u_i}$ $\&$ $\tilde{u_j}$, the $p$-gonal face $\tilde{\alpha}$ and the $q$-gonal face $\tilde{\beta}$, where $\alpha$ is a $p$-gonal face and $\beta$ is a $q$-gonal face in $X$ containing the edge $u_iu_j$. Observe that the face-cycle of $e$  is $\tilde{u_i}\mbox{-}\tilde{\alpha}\mbox{-}\tilde{u_j}\mbox{-}\tilde{\beta}\mbox{-}\tilde{u_i}$. Thus, $R(X)$ is semi-equivelar and the vertex-type  is $[4^1, p^1, 4^1, q^1]$.
\end{proof}

\begin{lemma}\label{lem3.11}
$(a)$ If $X$ is a $24$-vertex semi-equivelar spherical map of vertex-type  $[3^1, 4^3]$ then $X$ is isomorphic to either the boundary of small rhombicuboctahedron or the boundary of the pseudorhombicuboctahedron. $(b)$ The boundaries of the small rhombicuboctahedron and the pseudorhombicuboctahedron are non-isomorphic.
\end{lemma}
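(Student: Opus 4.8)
The plan is to prove $(a)$ by showing that a $24$-vertex map $X$ of type $[3^1,4^3]$ on $\mathbb{S}^2$ falls into \emph{at most two} isomorphism classes, and then to match these with the two named solids, which by the Examples section and Figures 2--3 are themselves $24$-vertex maps of type $[3^1,4^3]$; part $(b)$ will confirm they are non-isomorphic, so the two classes are exactly these two. First I would record the coarse data forced by Corollary \ref{cor6}: $X$ has $24$ vertices, $48$ edges and $26$ faces ($8$ triangles and $18$ squares), and every vertex has degree $4$. Since each vertex lies in exactly one triangle, the $8$ triangles are pairwise vertex-disjoint and their $24$ vertices exhaust $V(X)$. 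Each edge of a triangle is shared with a square (two triangles cannot share an edge, as $X$ is polyhedral), and at every vertex $u$ the face-cycle reads $T\mbox{-}Q\mbox{-}B\mbox{-}Q'$, where $T$ is the triangle at $u$, the squares $Q,Q'$ carry the two triangle-edges at $u$, and $B$ is a \emph{middle} square meeting $T$ only in $u$.

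Next I would reconstruct $X$ by coronas. Fix a triangle $T=x_1x_2x_3$; the three squares $Q_1,Q_2,Q_3$ on its edges and the three middle squares $B_1,B_2,B_3$ at $x_1,x_2,x_3$ are forced, and reading off their shared edges one checks that they close into a ring of six squares around $T$. An Euler count for this closed star (a disk with $12$ vertices, $18$ edges and $7$ faces) shows its outer boundary is a $9$-cycle. Each boundary vertex again lies in a unique triangle, so the $[3^1,4^3]$ condition determines the next layer of triangles and squares. Iterating, $X$ should organize into two square cupolas (each a square ringed by four triangles alternating with four squares) joined along an octagonal band of eight squares; that is, $X$ is an elongated square bicupola. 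The only remaining freedom is the rotational position in which the second cupola meets the band — the \emph{ortho} or the \emph{gyro} position, a single binary choice forced when the sphere is closed. This yields at most two maps, which I would then identify with the elongated square orthobicupola (the small rhombicuboctahedron, also recovered as the rectification of the cuboctahedron via Proposition \ref{cox:prop} and Lemma \ref{lem3.10}) and the elongated square gyrobicupola (the pseudorhombicuboctahedron).

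The hard part will be making ``forced at each step'' genuinely rigorous: one must verify that propagating the corona never produces a contradiction, that the process necessarily closes into the bicupola pattern rather than into some a priori different identification of the boundary $9$-cycle, and that exactly two — neither one nor three — consistent closings can occur. This global bookkeeping is the core of the argument, and it is where the parity constraints of Lemma \ref{DM2019} together with a careful orientation-and-gluing analysis of the eight-square band must be invoked to pin the number of admissible closings to two.

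For part $(b)$ I would use the triangle-adjacency statistics of squares as an isomorphism invariant. For a square $s$ let $t(s)$ denote the number of triangles sharing an edge with $s$; since each of the $8$ triangles contributes its $3$ edges to $3$ distinct squares, $\sum_s t(s)=24$. A direct inspection gives, for the small rhombicuboctahedron, $t(s)=0$ for $6$ squares and $t(s)=2$ for $12$ squares, with no square having $t(s)=1$; whereas for the pseudorhombicuboctahedron $t(s)=0$ for $2$ squares, $t(s)=1$ for $8$ squares and $t(s)=2$ for $8$ squares (both distributions summing to $24$). In particular the number of squares adjacent to exactly one triangle is $0$ in the former and $8$ in the latter. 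Since this count is preserved by any isomorphism, the two boundaries are non-isomorphic, which proves $(b)$ and simultaneously certifies that the two classes produced in $(a)$ are genuinely distinct.
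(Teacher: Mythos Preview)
Your part $(b)$ is essentially the paper's argument in complementary form: the paper counts, for each square, the number $i$ of \emph{squares} meeting it along an edge (calling it ``type $i$''), obtains $(s_2,s_3,s_4)=(12,0,6)$ for the small rhombicuboctahedron versus $(8,8,2)$ for the pseudorhombicuboctahedron, and concludes. Your $t(s)$ is exactly $4-i$, so the two invariants agree.

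For part $(a)$ the routes diverge. The paper does \emph{not} build coronas from a triangle. Its key step is the Claim that \emph{some square is of type $4$} (i.e.\ has all four neighbours squares); it proves this by eliminating the one $(s_2,s_3,s_4)$-profile with $s_4=0$. Starting from such a type-$4$ square $\alpha$, the four squares on its edges, the four corner triangles, and eight further squares are forced and assemble into a $20$-vertex disk $D_1$ (all distinctness checks are carried out explicitly). The complementary disk $D_2$ then has only four interior vertices forming a second type-$4$ square $\beta$, and there are exactly two ways to glue the four remaining squares between $\partial D_1$ and $\beta$. This is where the two maps come from; no appeal to cupola terminology or to Lemma~\ref{DM2019} is needed.

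Your corona-from-a-triangle plan is a legitimate strategy in principle, and your first-ring count (12 vertices, 7 faces, 9-cycle boundary) is correct. But the passage ``iterating, $X$ should organize into two square cupolas joined along an octagonal band'' is the entire content of the lemma and is not argued; moreover your proposed tool for it, Lemma~\ref{DM2019}, is about excluding vertex-\emph{types} and gives no leverage on how a fixed $[3^1,4^3]$ map must close up. The paper's type-$4$-square claim is precisely the structural fact that makes the enumeration short: it hands you the apex square of a cupola directly, so the ``hard part'' you flag never arises. If you want to salvage your approach, the natural move is to prove that same claim first (your own $t(s)$ count already gives $\sum t(s)=24$, hence a square with $t(s)=0$ exists unless the distribution is $(12,0,6,\ldots)$---and ruling that out is exactly the paper's Claim).
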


\begin{proof} Since $X$ is a $24$-vertex map of vertex-type  $[3^1, 4^3]$, it follows that $X$ has $8$ triangles and $18$ squares. Since each vertex of $X$ is in one triangle, no two triangles intersect. Thus, a square meets at most two triangles on edges and hence meets at least two other squares on edges. For $2 \le i \le 4$, a square $\alpha$ in $X$ is said to be of type $i$ if $\alpha$ intersects $i$ other squares on edges. Let $s_i$ be the number of squares in $X$ of type $i$. Consider the set $A:= \{(\beta, t) \colon \beta$ $\mbox{ a square of}$ $X,$ $t$ $\mbox{a triangle of}$ $X,$ $\beta \cap t$ $\mbox{is an edge}\}$. Two way counting the cardinality of $A$ gives $2 \times s_2 + 1 \times s_3 + 0 \times s_4 = 8 \times 3$. So, $2s_2+s_3 = 24$. Since $X$ has $18$ squares, $s_2 + s_3 +s_4 = 18$. These give $(s_2, s_3, s_4) = (6, 12, 0), (7, 10, 1), (8, 8, 2), (9, 6, 3), (10, 4, 4), (11, 2, 5)$ or $(12, 0, 6)$.

\smallskip

\noindent {\bf Claim.} There exists a square in $X$ of type $4$.


\begin{figure}[ht]
\tiny
\tikzstyle{ver}=[]
\tikzstyle{vert}=[circle, draw, fill=black!100, inner sep=0pt, minimum width=4pt]
\tikzstyle{vertex}=[circle, draw, fill=black!00, inner sep=0pt, minimum width=4pt]
\tikzstyle{edge} = [draw,thick,-]
\centering

\begin{tikzpicture}[scale=0.8]

\draw ({3.7+2*cos(22.5)}, {2*sin(22.5)}) -- ({3.7+2*cos(67.5)}, {2*sin(67.5)}) -- ({3.7+2*cos(112.5)}, {2*sin(112.5)}) -- ({3.7+2*cos(157.5)}, {2*sin(157.5)}) -- ({3.7+2*cos(202.5)}, {2*sin(202.5)}) -- ({3.5+2*cos(247.5)}, {2*sin(247.5)}) --  ({3+2*cos(292.5)}, {2.2*sin(292.5)}) -- ({4+2*cos(292.5)}, {2*sin(292.5)}) -- ({3.7+2*cos(337.5)}, {2*sin(337.5)}) -- ({3.7+2*cos(22.5)}, {2*sin(22.5)});

\draw ({3+2*cos(292.5)}, {2.2*sin(292.5)}) -- ({2.2+2*cos(292.5)}, {.5*sin(292.5)}) -- ({3+2*cos(292.5)}, {-.7*sin(292.5)}) -- ({3.6+2*cos(292.5)}, {.5*sin(292.5)}) -- ({3+2*cos(292.5)}, {2.2*sin(292.5)});

\draw ({2.2+2*cos(292.5)}, {.5*sin(292.5)}) -- ({3.7+2*cos(157.5)}, {2*sin(157.5)});
\draw ({2.2+2*cos(292.5)}, {.5*sin(292.5)}) -- ({3.5+2*cos(247.5)}, {2*sin(247.5)});
\draw ({3.6+2*cos(292.5)}, {.5*sin(292.5)}) -- ({4+2*cos(292.5)}, {2*sin(292.5)});
\draw ({3.6+2*cos(292.5)}, {.5*sin(292.5)}) -- ({3.7+2*cos(22.5)}, {2*sin(22.5)});
\draw ({3+2*cos(292.5)}, {-.7*sin(292.5)}) -- ({3.7+2*cos(67.5)}, {2*sin(67.5)});
\draw ({3+2*cos(292.5)}, {-.7*sin(292.5)}) -- ({3.7+2*cos(112.5)}, {2*sin(112.5)});

\node[ver] () at ({3+2*cos(292.5)}, {2.5*sin(292.5)}) {$w$};
\node[ver] () at ({3+2*cos(292.5)}, {.5*sin(292.5)}) {$\alpha_2$};
\node[ver] () at ({1.5+2*cos(292.5)}, {.8*sin(292.5)}) {$\alpha_1$};
\node[ver] () at ({3.4+2*cos(157.5)}, {2*sin(157.5)}) {$u_1$};
\node[ver] () at ({4+2*cos(292.5)}, {.5*sin(292.5)}) {$v_2$};
\node[ver] () at ({2.7+2*cos(292.5)}, {-.7*sin(292.5)}) {$a$};
\node[ver] () at ({4+2*cos(22.5)}, {2*sin(22.5)}) {$v_1$};
\node[ver] () at ({1.8+2*cos(292.5)}, {.5*sin(292.5)}) {$u_2$};
\node[ver] () at ({2.4+2*cos(292.5)}, {1.5*sin(292.5)}) {$t_1$};
\node[ver] () at ({3.6+2*cos(292.5)}, {1.5*sin(292.5)}) {$t_2$};
\node[ver] () at ({3.7+2*cos(67.5)}, {2.2*sin(67.5)}) {$c$};
\node[ver] () at ({3.7+2*cos(112.5)}, {2.2*sin(112.5)}) {$b$};
\node[ver] () at ({1.9+2*cos(292.5)}, {1.2+.5*sin(292.5)}) {$\alpha$};
\node[ver] () at ({3.9+2*cos(292.5)}, {1.2+.5*sin(292.5)}) {$\beta$};

\end{tikzpicture}

\vspace{-2mm}
\caption{{Part of the map $X$ when $(s_2,s_3,s_4)=(6,12,0)$.}} \label{fig:4}
\end{figure}

If the claim is not true, then $(s_2, s_3, s_4) = (6, 12, 0)$. Thus, there are $12$ squares of type $3$. Each of these $12$ type $3$ squares meets one triangle on an edge. Since there are $8$ triangles, it follows that there is a triangle $abc$ which meets two type $3$ squares on edges. Assume, with out loss of generality, that $\alpha := abu_1u_2$ and $\beta := acv_1v_2$ are type $3$ squares (see Fig. \ref{fig:4}). 
Since $abu_1u_2$ is type $3$, $u_1u_2$ is in $2$ squares, say in $\alpha~\&~\alpha_1$. Similarly $au_2$ is in two squares. Clearly, they are $\alpha$ $\&$ $\alpha_2 := au_2wv_2$ for some vertex $w$. Then $\alpha, \alpha_1, \alpha_2$  are three squares incident to $u_2$ and hence the $4^{th}$ face incident to $u_2$ is a triangle, say $t_1$, containing $u_2w$. Similarly, there exists a triangle $t_2$ containing  $v_2w$. Since $\alpha_2 = au_2wv_2$ is a face, it follows that $u_2 \neq v_2$ and $u_2v_2$ is a non-edge. These imply  that $t_1 \neq t_2$ are two triangles containing $w$, a contradiction to the fact that each vertex is in  one triangle. This proves the claim.

\begin{figure}[ht]
\tiny
\tikzstyle{ver}=[]
\tikzstyle{vert}=[circle, draw, fill=black!100, inner sep=0pt, minimum width=4pt]
\tikzstyle{vertex}=[circle, draw, fill=black!00, inner sep=0pt, minimum width=4pt]
\tikzstyle{edge} = [draw,thick,-]
\centering

\begin{tikzpicture}[scale=1]

\draw ({3.7+2*cos(22.5)}, {-.5+2*sin(22.5)}) -- ({3.2+2*cos(67.5)}, {2*sin(67.5)}) -- ({4.1+2*cos(112.5)}, {2*sin(112.5)}) -- ({3.7+2*cos(157.5)}, {-.5+2*sin(157.5)}) -- ({3.7+2*cos(202.5)}, {.5+2*sin(202.5)}) -- ({4.1+2*cos(247.5)}, {2*sin(247.5)}) -- ({3.2+2*cos(292.5)}, {2*sin(292.5)}) -- ({3.7+2*cos(337.5)}, {.5+2*sin(337.5)}) -- ({3.7+2*cos(22.5)}, {-.5+2*sin(22.5)});

\draw ({3.2+2*cos(67.5)}, {2*sin(67.5)}) -- ({3.2+2*cos(292.5)}, {2*sin(292.5)});
\draw ({4.1+2*cos(247.5)}, {2*sin(247.5)}) -- ({4.1+2*cos(112.5)}, {2*sin(112.5)});

\draw ({3.7+2*cos(22.5)}, {-.5+2*sin(22.5)}) -- ({3.7+2*cos(157.5)}, {-.5+2*sin(157.5)});
\draw ({3.7+2*cos(337.5)}, {.5+2*sin(337.5)}) -- ({3.7+2*cos(202.5)}, {.5+2*sin(202.5)});

\draw ({3+2*cos(22.5)}, {-.5+2*sin(22.5)}) -- ({3.2+2*cos(67.5)}, {-.7+2*sin(67.5)}) -- ({4.1+2*cos(112.5)}, {-.7+2*sin(112.5)}) -- ({.6+2*cos(22.5)}, {-.5+2*sin(22.5)}) --
({.6+2*cos(22.5)}, {-1.02+2*sin(22.5)}) -- ({4.1+2*cos(112.5)}, {-3+2*sin(112.5)}) -- ({4.73+2*cos(112.5)}, {-3+2*sin(112.5)}) -- ({3+2*cos(22.5)}, {-1.02+2*sin(22.5)}) -- ({3+2*cos(22.5)}, {-.5+2*sin(22.5)});

\node[ver] () at ({4+2*cos(22.5)}, {-.5+2*sin(22.5)}) {$u_{15}$};
\node[ver] () at ({3.2+2*cos(67.5)}, {.2+2*sin(67.5)}) {$u_{14}$};
\node[ver] () at ({4.1+2*cos(112.5)}, {.2+2*sin(112.5)}) {$u_{13}$};
\node[ver] () at ({3.4+2*cos(157.5)}, {-.5+2*sin(157.5)}) {$u_{20}$};
\node[ver] () at ({3.4+2*cos(202.5)}, {.5+2*sin(202.5)}) {$u_{19}$};
\node[ver] () at ({4.1+2*cos(247.5)}, {-.2+2*sin(247.5)}) {$u_{18}$};
\node[ver] () at ({3.2+2*cos(292.5)}, {-.2+2*sin(292.5)}) {$u_{17}$};
\node[ver] () at ({4+2*cos(337.5)}, {.5+2*sin(337.5)}) {$u_{16}$};

\node[ver] () at ({3.1+2*cos(22.5)}, {-.37+2*sin(22.5)}) {$u_{7}$};
\node[ver] () at ({3.4+2*cos(67.5)}, {-.7+2*sin(67.5)}) {$u_{6}$};
\node[ver] () at ({3.9+2*cos(112.5)}, {-.7+2*sin(112.5)}) {$u_{5}$};
\node[ver] () at ({.5+2*cos(22.5)}, {-.37+2*sin(22.5)}) {$u_{12}$};
\node[ver] () at ({.5+2*cos(22.5)}, {-1.23+2*sin(22.5)}) {$u_{11}$};
\node[ver] () at ({3.8+2*cos(112.5)}, {-3+2*sin(112.5)}) {$u_{10}$};
\node[ver] () at ({5+2*cos(112.5)}, {-3+2*sin(112.5)}) {$u_{9}$};
\node[ver] () at ({3.1+2*cos(22.5)}, {-1.25+2*sin(22.5)}) {$u_{8}$};

\node[ver] () at ({1.8+2*cos(22.5)}, {-.77+2*sin(22.5)}) {$\alpha$};

\node[ver] () at ({2.3+2*cos(22.5)}, {-.37+2*sin(22.5)}) {$u_{2}$};
\node[ver] () at ({1.3+2*cos(22.5)}, {-.37+2*sin(22.5)}) {$u_{1}$};
\node[ver] () at ({2.3+2*cos(22.5)}, {-1.2+2*sin(22.5)}) {$u_{3}$};
\node[ver] () at ({1.3+2*cos(22.5)}, {-1.2+2*sin(22.5)}) {$u_{4}$};

\node[ver] () at ({4.3+2*cos(292.5)}, {2*sin(292.5)}) {$A=D_1$};


\end{tikzpicture}\hspace{.2cm}
\begin{tikzpicture}[scale=0.8]

\draw  ({3.7+2*cos(22.5)}, {2*sin(22.5)}) -- ({3.7+2*cos(67.5)}, {2*sin(67.5)}) -- ({3.7+2*cos(112.5)}, {2*sin(112.5)}) -- ({3.7+2*cos(157.5)}, {2*sin(157.5)}) -- ({3.7+2*cos(202.5)}, {2*sin(202.5)}) -- ({3.7+2*cos(247.5)}, {2*sin(247.5)}) -- ({3.7+2*cos(292.5)}, {2*sin(292.5)}) -- ({3.7+2*cos(337.5)}, {2*sin(337.5)}) -- ({3.7+2*cos(22.5)}, {2*sin(22.5)});

\draw ({3.7+2*cos(22.5)}, {2*sin(22.5)}) -- ({3.7+2*cos(157.5)}, {2*sin(157.5)});
\draw ({3.7+2*cos(337.5)}, {2*sin(337.5)}) -- ({3.7+2*cos(202.5)}, {2*sin(202.5)});

\draw ({3.7+2*cos(67.5)}, {2*sin(67.5)}) -- ({3.7+2*cos(292.5)}, {2*sin(292.5)});
\draw ({3.7+2*cos(247.5)}, {2*sin(247.5)}) -- ({3.7+2*cos(112.5)}, {2*sin(112.5)});

\node[ver] () at ({4+2*cos(22.5)}, {.2+2*sin(22.5)}) {$u_{16}$};
\node[ver] () at ({3.8+2*cos(67.5)}, {.2+2*sin(67.5)}) {$u_{17}$};
\node[ver] () at ({3.8+2*cos(112.5)}, {.2+2*sin(112.5)}) {$u_{18}$};
\node[ver] () at ({3.4+2*cos(157.5)}, {.3+2*sin(157.5)}) {$u_{19}$};
\node[ver] () at ({3.4+2*cos(202.5)}, {-.3+2*sin(202.5)}) {$u_{20}$};
\node[ver] () at ({3.9+2*cos(247.5)}, {-.3+2*sin(247.5)}) {$u_{13}$};
\node[ver] () at ({3.9+2*cos(292.5)}, {-.3+2*sin(292.5)}) {$u_{14}$};
\node[ver] () at ({4+2*cos(337.5)}, {-.3+2*sin(337.5)}) {$u_{15}$};

\node[ver] () at ({3.3+2*cos(112.5)}, {-1.3+2*sin(112.5)}) {$u_{24}$};
\node[ver] () at ({5.6+2*cos(112.5)}, {-1.3+2*sin(112.5)}) {$u_{23}$};

\node[ver] () at ({3.3+2*cos(112.5)}, {-2.4+2*sin(112.5)}) {$u_{21}$};
\node[ver] () at ({5.6+2*cos(112.5)}, {-2.4+2*sin(112.5)}) {$u_{22}$};

\node[ver] () at ({4.7+2*cos(112.5)}, {-4.5+2*sin(112.5)}) {$B$};
\node[ver] () at ({4.5+2*cos(112.5)}, {-1.8+2*sin(112.5)}) {$\beta$};

\end{tikzpicture}\hspace{.2cm}
\begin{tikzpicture}[scale=0.8]

\draw  ({3.7+2*cos(22.5)}, {2*sin(22.5)}) -- ({3.7+2*cos(67.5)}, {2*sin(67.5)}) -- ({3.7+2*cos(112.5)}, {2*sin(112.5)}) -- ({3.7+2*cos(157.5)}, {2*sin(157.5)}) -- ({3.7+2*cos(202.5)}, {2*sin(202.5)}) -- ({3.7+2*cos(247.5)}, {2*sin(247.5)}) -- ({3.7+2*cos(292.5)}, {2*sin(292.5)}) -- ({3.7+2*cos(337.5)}, {2*sin(337.5)}) -- ({3.7+2*cos(22.5)}, {2*sin(22.5)});

\draw ({3.7+2*cos(22.5)}, {2*sin(22.5)}) -- ({3.7+2*cos(157.5)}, {2*sin(157.5)});
\draw ({3.7+2*cos(337.5)}, {2*sin(337.5)}) -- ({3.7+2*cos(202.5)}, {2*sin(202.5)});

\draw ({3.7+2*cos(67.5)}, {2*sin(67.5)}) -- ({3.7+2*cos(292.5)}, {2*sin(292.5)});
\draw ({3.7+2*cos(247.5)}, {2*sin(247.5)}) -- ({3.7+2*cos(112.5)}, {2*sin(112.5)});

\node[ver] () at ({4+2*cos(22.5)}, {.2+2*sin(22.5)}) {$u_{17}$};
\node[ver] () at ({3.8+2*cos(67.5)}, {.2+2*sin(67.5)}) {$u_{18}$};
\node[ver] () at ({3.8+2*cos(112.5)}, {.2+2*sin(112.5)}) {$u_{19}$};
\node[ver] () at ({3.4+2*cos(157.5)}, {.3+2*sin(157.5)}) {$u_{20}$};
\node[ver] () at ({3.4+2*cos(202.5)}, {-.3+2*sin(202.5)}) {$u_{13}$};
\node[ver] () at ({3.9+2*cos(247.5)}, {-.3+2*sin(247.5)}) {$u_{14}$};
\node[ver] () at ({3.9+2*cos(292.5)}, {-.3+2*sin(292.5)}) {$u_{15}$};
\node[ver] () at ({4+2*cos(337.5)}, {-.3+2*sin(337.5)}) {$u_{16}$};

\node[ver] () at ({3.3+2*cos(112.5)}, {-1.3+2*sin(112.5)}) {$u_{24}$};
\node[ver] () at ({5.6+2*cos(112.5)}, {-1.3+2*sin(112.5)}) {$u_{23}$};

\node[ver] () at ({3.3+2*cos(112.5)}, {-2.4+2*sin(112.5)}) {$u_{21}$};
\node[ver] () at ({5.6+2*cos(112.5)}, {-2.4+2*sin(112.5)}) {$u_{22}$};

\node[ver] () at ({4.7+2*cos(112.5)}, {-4.5+2*sin(112.5)}) {$C$};

\node[ver] () at ({4.5+2*cos(112.5)}, {-1.8+2*sin(112.5)}) {$\beta$};

\end{tikzpicture}

\vspace{-1mm}
\caption{{$A$ is a part of $X$ with 13 squares. $B$ and $C$ are complements of $A$ in $X$.}} \label{fig:5}
\end{figure}

Let $\alpha = u_1u_2u_3u_4$ be a type $4$ square. Let the four squares which intersect $\alpha$ on edges be $u_1u_2u_6u_5, u_2u_3u_8u_7, u_3u_4u_{10}u_9~\&~ u_1u_4u_{11}u_{12}$ (see Fig. \ref{fig:5}). Then $u_2u_6u_7, u_3u_8u_9, u_4u_{10}u_{11}$ and $u_1u_5u_{12}$ must be triangles. If $u_i = u_j$ for $5 \le i \neq j \le 12$ then $u_i$ is in two triangles, a contradiction. Thus, $u_1, \dots, u_{12}$ are distinct vertices. For $5 \le i \le 12$, $u_i$ is in two more squares. This gives us $8$ squares of the form $F_1=u_5u_6u_{14}u_{13}$, $F_2=u_6u_7u_{15}u_{14}$, $F_3=u_7u_8u_{16}u_{15}$, $F_4=u_8u_9u_{17}u_{16}$, $F_5=u_9u_{10}u_{18}u_{17}$, $F_6=u_{10}u_{11}u_{19}u_{18}$, $F_7=u_{11}u_{12}u_{20}u_{19}$, $F_8=u_{12}u_{5}u_{13}u_{20}$.  Suppose $F_i=F_j$ for some $i\neq  j$. Assume, without loss of generality, that $F_1 = F_j$ for some $j\neq 1$.  Clearly, $j \neq  2, 3, 7, 8$.  If $F_1=F_4$ then, since $u_5, \dots, u_{12}$ are  distinct,  $u_5= u_{16}$ or $u_{17}$. In either case, the number of edges containing $u_5$ would be more than 4, a contradiction. Thus, $j\neq 4$. By similar arguments, $j\neq 5, 6$. Thus, $F_1, \dots, F_8$ are distinct.

If $u_k = u_{\ell}$ for $13 \le k \neq \ell \le 20$, then $u_k$ would be in $4$ squares, a contradiction. This  implies that $u_1, \dots, u_{20}$ are distinct vertices and we have a disk $D_1$ with these $20$ vertices, $13$ squares and $4$ triangles. Then the remaining $5$ squares and $4$ triangles give a disk $D_2$ such that $\partial D_1 = \partial D_2$. Since each vertex on $\partial D_1$ is in $2$ squares of $D_1$, it follows that each vertex on $\partial D_2$ is in one triangle and one square of $D_2$. Therefore, the $4$ remaining vertices of $X$ are inside $D_2$ and form a square, say $\beta := u_{21}u_{22}u_{23}u_{24}$.
Then $4$ remaining squares in $D_2$ have one edge common with $\partial D_2$ and one each with $\beta$. Since $\partial D_1 = \partial D_2$ and each internal vertex is in $3$ squares, it follows that, up to isomorphism, $D_2$ is $B$ or $C$. If $D_2 = B$ then $X$ is isomorphic to small rhombicuboctahedron and if $D_2 = C$ then $X$ is isomorphic to the boundary of the pseudorhombicuboctahedron. This proves part $(a)$.

Observe that $(s_2, s_3, s_4) = (12, 0, 6)$ for the small rhombicuboctahedron and $(s_2, s_3, s_4)= (8, 8, 2)$  for the pseudorhombicuboctahedron. Thus, their boundaries are non-isomorphic. This proves part $(b)$.  
\end{proof} 

\begin{lemma}\label{lem3.12}
Let $K$ be a semi-equivelar spherical map. Suppose the vertex-type of $K$ is not $[3^1, 4^3]$, $[3^4, 4^1]$ or $[3^4, 5^1]$. If the number of vertices and vertex-type of $K$ are same as those of the boundary $\partial P$ of an Archimedean solid $P$ then $K \cong \partial P$.
\end{lemma}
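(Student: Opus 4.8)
The plan is to run the truncation and rectification operations of Definitions \ref{dfn1} and \ref{dfn2} \emph{backwards}. By Corollary \ref{cor6}, once the three excluded types are removed, the hypothesis forces $\partial P$ to be one of exactly ten Archimedean solids, each of which, by Proposition \ref{cox:prop}, is a truncation or a rectification of a Platonic solid or of the cuboctahedron or the icosidodecahedron. Starting from the abstract map $K$ I would reconstruct the simpler ``parent'' map $K'$ combinatorially, check that $K'$ is a semi-equivelar map of the parent type (hence unique by Lemma \ref{lem3.4}, or by a case already settled in this proof), and then quote Lemma \ref{lem3.8} or Lemma \ref{lem3.10} to conclude that $K \cong T(K') \cong \partial(TP)$ (resp. $K \cong R(K') \cong \partial(RP)$), which is $\partial P$.

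Consider first the five types $[3^1, 6^2], [4^1, 6^2], [5^1, 6^2], [3^1, 8^2], [3^1, 10^2]$, all of the form $[p^1, q^2]$ with $q$ even and $q \geq 6$. Here each vertex lies in a unique $p$-gon, so the $p$-gons are pairwise disjoint, and each vertex carries exactly one edge not lying on its $p$-gon (the ``outgoing'' edge between the two $q$-gons). Collapsing every $p$-gon to a single node, Lemma \ref{lem3.5} guarantees at most one edge between two $p$-gons, so the collapsed object $K'$ is a simple map in which every node has degree $p$ and every $q$-gon has become a $(q/2)$-gon; that is, $K'$ is of Platonic type $[(q/2)^p]$, namely $[3^3], [3^4], [3^5], [4^3]$ or $[5^3]$ respectively. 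By Lemma \ref{lem3.4}, $K'$ is unique, and by construction truncating $K'$ returns $K$; Lemma \ref{lem3.8} then identifies $K$ with $\partial(TP) = \partial P$.

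For the degree-$4$ types I reverse rectification. If $K$ has type $[3^1, 4^1, 3^1, 4^1]$ or $[3^1, 5^1, 3^1, 5^1]$, I take the triangles of $K$ as the nodes of $K'$ and declare two triangles adjacent when they share a vertex of $K$; since the vertex-type admits no edge joining two triangles, two triangles share at most one vertex, so $K'$ is a simple map of type $[4^3]$ (a cube) or $[5^3]$ (a dodecahedron), unique by Lemma \ref{lem3.4}, with $R(K') \cong K$. This settles the cuboctahedron and the icosidodecahedron. Handling these two first lets me treat the three remaining, ``second-layer'' solids in the same way: the small rhombicosidodecahedron $[3^1, 4^1, 5^1, 4^1]$ un-rectifies (collapsing the squares, which are the vertex-figures) to an icosidodecahedron, while the great rhombicuboctahedron $[4^1, 6^1, 8^1]$ and the great rhombicosidodecahedron $[4^1, 6^1, 10^1]$ un-truncate (again collapsing the squares) to a cuboctahedron and an icosidodecahedron, respectively. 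In each case $K'$ is one of the two maps whose uniqueness was just proved, and Lemmas \ref{lem3.8} and \ref{lem3.10} finish the argument.

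The main obstacle is justifying the inverse constructions themselves: one must verify that each collapsed object is a genuine polyhedral semi-equivelar map of the asserted parent type, and that applying truncation or rectification to it recovers $K$ exactly. The crux is the simplicity statement, that two collapsed faces are joined by at most one edge (or share at most one vertex). In the $[p^1, q^2]$ family this is precisely Lemma \ref{lem3.5}; in the two rectification families it follows at once from the absence of edges joining two like faces in the vertex-type; and for the great rhombic solids, where the two faces flanking an outgoing edge are a hexagon and an octagon (or a decagon), it follows from the same separation argument as Lemma \ref{lem3.5}, the faces being large enough (sizes $\geq 6$) to forbid the offending diagonal. Once simplicity is in hand, the face counts of Corollary \ref{cor6} ensure $K'$ has the correct number of vertices, and the canonical bijection between the vertices and faces of $K'$ and the faces of $K$ makes the identification $K \cong T(K')$ or $K \cong R(K')$ routine.
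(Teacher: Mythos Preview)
Your proposal is correct and follows essentially the same route as the paper's own proof: for the five $[p^1,q^2]$ types you collapse the $p$-gons (using Lemma~\ref{lem3.5} for simplicity) to reach a Platonic map and invert truncation via Lemma~\ref{lem3.8}; for $[3^1,4^1,3^1,4^1]$ and $[3^1,5^1,3^1,5^1]$ you collapse the triangles to reach a cube or dodecahedron and invert rectification via Lemma~\ref{lem3.10}; and for $[3^1,4^1,5^1,4^1]$, $[4^1,6^1,8^1]$, $[4^1,6^1,10^1]$ you collapse the squares to reach the cuboctahedron or icosidodecahedron already handled, then invert $R$ or $T$. The paper phrases the collapse for the great rhombic solids slightly differently (two squares are declared adjacent when they both meet the same hexagon rather than when an ``outgoing'' edge joins them), but since each outgoing edge lies on a hexagon and connects consecutive squares around it, the two descriptions produce the same graph, and your simplicity argument via a Lemma~\ref{lem3.5}-style separation is at the same level of detail as the paper's.
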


\begin{proof}
Let  $K$ be an $n$-vertex  map of  vertex-type $[p_1^{n_1}, \dots, p_{\ell}^{n_{\ell}}]$. Then, $(n, [p_1^{n_1}, \dots,$ $p_{\ell}^{n_{\ell}}])$ = $(12, [3^1, 6^2]),$ $(12, [3^1, 4^1, 3^1, 4^1]),$ $(24, [3^1, 8^2]),$ $(30, [3^1, 5^1,$ $3^1, 5^1]),$ $(60, [3^1, 4^1, 5^1, 4^1]),$ $(60,$ $[5^1, 6^2]),$ $(48, [4^1, 6^1, 8^1]), (120, [4^1, 6^1, 10^1]),$ $(24,$ $[4^1, 6^2])$ or $(60, [3^1, 10^2])$.

If $(n, [p_1^{n_1}, \dots, p_{\ell}^{n_{\ell}}]) = (60, [5^1, 6^2])$, then $K$ has twelve pentagons which partition the vertex set. Consider the graph $G$ whose nodes are the pentagons of $K$. Two nodes are joined by a link in $G$ if the corresponding faces are joined by an edge of $K$. (Observe that this $G$ is same as in the proof of Lemma \ref{lem3.5}.) By the proof of Lemma \ref{lem3.5}, $G$ is a simple $5$-regular graph and can be drawn on $\mathbb{S}^2$. This gives a map $[K]$ whose faces are triangles. (Each hexagon of $K$ gives a 3-cycle.) Since $[K]$ has $12$ vertices, by Lemma \ref{lem3.4}, $[K]$ is the boundary of the icosahedron. By Lemma \ref{lem3.8}, $T([K])$ is the boundary of the truncated icosahedron. Clearly, the operations $X \mapsto [X] $ and $Y \mapsto T(Y)$ are inverse operations. Thus, $K$ is isomorphic to $T([K])$. Therefore, $K$ is isomorphic to the boundary of the truncated icosahedron. Similarly, by Lemmas \ref{lem3.4}, \ref{lem3.5} $\&$ \ref{lem3.8}, the maps of vertex-types $[3^1, 6^2], [3^1, 8^2], [3^1, 10^2]$ and $[4^1, 6^2]$ are isomorphic to the boundaries of the truncated tetrahedron, the truncated cube, the truncated dodecahedron and  the truncated octahedron respectively.

If $(n, [p_1^{n_1}, \dots, p_{\ell}^{n_{\ell}}]) = (12, [3^1, 4^1, 3^1, 4^1])$, then $K$ consists of $8$ triangles and $6$ squares. Consider the graph $G_1$ whose nodes are the triangles of $K$. Two nodes $z_i, z_j$ are joined by a link $z_iz_j$ in $G_1$ if $z_i \cap z_j \neq \emptyset$. Since the intersecting triangles in $K$ meet on a vertex and through each vertex there are exactly two triangles, it follows that $G_1$ is a simple $3$-regular graph. Since $K$ is a map on $\mathbb{S}^2$, $G_1$ can be drawn on $\mathbb{S}^2$. Thus, $G_1$  gives a map $\overline{K}$ on $\mathbb{S}^2$ with $8$ vertices. The four triangles intersecting a square of $K$ form a face of $\overline{K}$. So, $\overline{K}$ is of vertex-type $[4^3]$.
Thus, $\overline{K}$ is the boundary of the cube. Hence, by Lemma \ref{lem3.10} and Proposition \ref{prop:cox}, $R(\overline{K})$ is the boundary of cuboctahedron. Clearly, the operations $X \mapsto \overline{X} ~\&~ Y \mapsto R(Y)$ are inverse operations. Thus, $K$ is isomorphic to $R(\overline{K})$. So, $K$ is isomorphic to the boundary of cuboctahedron. Similarly, a $30$-vertex map of vertex-type $[3^1, 5^1, 3^1, 5^1]$ is isomorphic to the boundary of icosidodecahedron.

If $(n, [p_1^{n_1}, \dots, p_{\ell}^{n_{\ell}}]) = (60, [3^1, 4^1, 5^1, 4^1])$, then $K$ consists of $20$ triangles, $30$ squares and $12$ pentagons. Consider the graph $G_2$ whose nodes are the squares of $K$. Two nodes $x_i, x_j$ are joined by a link $x_ix_j$ in $G_2$ if $x_i \cap x_j \neq \emptyset$. Since the intersecting squares in $K$ meet on a vertex and through each vertex there are exactly two squares, it follows that $G_2$ is a simple $4$-regular graph. Again,  $G_2$ can be drawn on $\mathbb{S}^2$, and hence $G_2$  gives a $30$-vertex map $\overline{K}$ on $\mathbb{S}^2$. The three squares intersecting a triangle and the four squares intersecting a pentagon form faces of $\overline{K}$. So, $\overline{K}$ is of vertex-type $[3^1, 5^1, 3 ^1, 5^1]$. Thus, by the observation in the previous paragraph, $\overline{K}$ is the boundary of the icosidodecahedron. Hence, by Lemma \ref{lem3.10} and Proposition \ref{prop:cox}, $R(\overline{K})$ is the boundary of small rhombicosidodecahedron. Clearly, the operations $X \mapsto \overline{X} ~\&~ Y \mapsto R(Y)$ are inverse operations. Thus, $K$ is isomorphic to $R(\overline{K})$. Therefore, $K$ is isomorphic to the boundary of small rhombicosidodecahedron.

Finally, if $(n, [p_1^{n_1}, \dots, p_{\ell}^{n_{\ell}}]) = (120, [4^1, 6^1, 10^1])$, then $K$ consists of $30$ squares, $20$ hexagons and $12$ decagons. Consider the graph $G_3$ whose nodes are the squares of $K$. Two nodes $y_i, y_j$ are joined by a link $y_iy_j$ in $G_3$ if there exists a hexagon $F$ in $K$ such that $y_i \cap F \neq \emptyset$ $\&$ $y_j \cap F \neq \emptyset$.  Since $K$ is a polyhedral map, the adjacent squares of each hexagon in $K$ are distinct. This implies that $G_3$ is a simple $4$-regular graph and can be drawn on $\mathbb{S}^2$. This gives a $30$-vertex map $\overline{K}$ on $\mathbb{S}^2$. The three squares intersecting a hexagon and the five squares intersecting a decagon  form faces of $\overline{K}$. So, $\overline{K}$ is of vertex-type $[3^1, 5^1, 3 ^1, 5^1]$. Thus, (as above)  $\overline{K}$ is the boundary of the icosidodecahedron. Hence by Lemma \ref{lem3.8} and Proposition \ref{prop:cox}, $R(\overline{K})$ is the boundary of great rhombicosidodecahedron. Clearly, the operations $X \mapsto [X]$ $ \&$ $Y \mapsto T(Y)$ are inverse operations. Thus, $K$ is isomorphic to $T([K])$. Therefore, $K$ is isomorphic to the boundary of great rhombicosidodecahedron.
Similarly, a $48$-vertex map of vertex-type $[4^1, 6^1, 8^1]$ is isomorphic to the boundary of great rhombicuboctahedron.
\end{proof}

\begin{lemma} \label{lem3.13}
A semi-equivelar spherical map of vertex-type $[3^4, 4^1]$  with $24$ vertices (respectively, $[3^4, 5^1]$ with $60$ vertices) is unique.
\end{lemma}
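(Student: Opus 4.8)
The plan is to treat the two cases in parallel and to reduce each to the uniqueness of a Platonic map supplied by Lemma \ref{lem3.4} (these are the maps underlying the snub cube and snub dodecahedron). Write $n=24,\ m=4$ in the first case and $n=60,\ m=5$ in the second, so that $K$ has vertex-type $[3^4, m^1]$, every vertex has degree $5$, and each vertex lies in exactly one $m$-gon. A double count exactly as in Corollary \ref{cor6} gives that $K$ has $n/m$ $m$-gonal faces and $4n/3$ triangles, and that the $m$-gons are pairwise vertex-disjoint and partition $V(K)$. Since two faces of a polyhedral map meet in at most an edge, a triangle that meets an $m$-gon in two vertices must meet it in the edge joining them; hence each triangle either shares an edge with a unique $m$-gon (a \emph{border triangle}, with its third vertex in another $m$-gon) or has its three vertices in three distinct $m$-gons (a \emph{vertex triangle}). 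Counting border triangles against the $n$ edges of the $m$-gons shows there are exactly $8$ (resp.\ $20$) vertex triangles, and, since each such triangle uses three corners, that each corner of each $m$-gon is a vertex of exactly one vertex triangle.

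First I would build a reduced map $\overline{K}$ on $\mathbb{S}^2$ whose vertices are the $m$-gons of $K$ and whose faces are the vertex triangles: a vertex triangle with vertices in the $m$-gons $A,B,C$ is declared to be the triangle $ABC$ of $\overline{K}$. By the previous paragraph $\overline{K}$ has $6$ vertices and $8$ faces (resp.\ $12$ vertices and $20$ faces), and each $m$-gon-node has degree $4$ (resp.\ $5$). The substantive point here is to verify that these triangles genuinely assemble into a simplicial map on $\mathbb{S}^2$, i.e.\ that each edge $AB$ of $\overline{K}$ lies in exactly two vertex triangles and that the vertex triangles incident to a fixed $m$-gon close up into a single cycle. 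I would establish this by tracking the fan of four triangles in the face-cycle $[3^4, m^1]$ at each corner: the two border triangles of the $m$-gon at a corner are the extreme members of the fan, the unique vertex triangle at that corner is an interior member, and the apex of a border triangle sitting on an edge of the $m$-gon is shared by the vertex triangles at the two endpoints of that edge, which is precisely what chains consecutive vertex triangles around the $m$-gon into a cycle. Granting these checks, $\overline{K}$ is a $6$-vertex $4$-regular (resp.\ $12$-vertex $5$-regular) triangulation of $\mathbb{S}^2$, so by Lemma \ref{lem3.4} it is the boundary of the octahedron (resp.\ icosahedron).

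Finally I would reconstruct $K$ from its scaffold $\overline{K}$. Knowing that the $m$-gons are arranged as the vertices of the octahedron (resp.\ icosahedron) fixes, around each $m$-gon, the cyclic pattern of its neighbouring $m$-gons, its border triangles and the vertex triangles at its corners. The face-cycle condition $[3^4, m^1]$ then forces, corner by corner, which of the two interior triangles of each fan is the vertex triangle; the only remaining freedom is a single global binary choice of this \emph{twist}, since reversing the orientation of $\mathbb{S}^2$ interchanges the two interior members of every fan simultaneously. Propagating the forced identifications from one $m$-gon to its neighbours along the connected scaffold shows that any two maps with the prescribed data and the same twist are isomorphic by an isomorphism covering a symmetry of $\overline{K}$, while the two twists are matched by an orientation-reversing isomorphism; hence $K$ is unique up to isomorphism.

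The step I expect to be the main obstacle is the reconstruction in the last paragraph: proving that the local snub pattern is genuinely forced by the scaffold together with the vertex-type, and that the propagation is globally consistent with only the single chirality choice, rather than producing several inequivalent fillings. This is the analogue, for the snub solids, of the explicit disk-by-disk matching carried out in Lemma \ref{lem3.11}, and it is where the two mirror-image snub cubes (resp.\ snub dodecahedra) must be shown to yield isomorphic abstract maps.
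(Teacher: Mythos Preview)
Your approach is genuinely different from the paper's. The paper does \emph{not} reduce to the octahedron/icosahedron via Lemma~\ref{lem3.4}. Instead it colours edges: an edge is \emph{blue} if it separates two triangles, and a blue edge $uv$ is \emph{deep blue} if at each of $u,v$ three triangles lie on one side and one on the other. The paper proves that every vertex lies on exactly one deep blue edge, so the twelve (resp.\ thirty) deep blue edges form a perfect matching; deleting them merges $24$ (resp.\ $60$) triangles pairwise into squares and produces a semi-equivelar map $\widetilde{X}$ of type $[3^1,4^3]$ (resp.\ $[3^1,4^1,5^1,4^1]$). Lemma~\ref{lem3.11} (resp.\ Lemma~\ref{lem3.12}) then identifies $\widetilde{X}$ as the boundary of the small rhombicuboctahedron (resp.\ small rhombicosidodecahedron), after checking in the $[3^1,4^3]$ case that the new squares are all of ``type~2'' so that the pseudorhombicuboctahedron is excluded. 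Finally, reinserting one diagonal per new square admits only two globally consistent choices, interchanged by a reflection. So the paper reduces to the rhombic Archimedean solids rather than to the Platonic ones, and the uniqueness input is Lemmas~\ref{lem3.11}--\ref{lem3.12}, not Lemma~\ref{lem3.4}.

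Your route is attractive, but your chaining claim is not correct as stated, and the error is exactly the twist-consistency issue you defer to the end. At a corner $u$ with face-cycle $M,T_1,T_2,T_3,T_4$ (so $T_1,T_4$ are the two border triangles of $M$ at $u$, with apices $x_1,x_3$), the unique vertex triangle at $u$ is either $T_2=ux_1x_2$ or $T_3=ux_2x_3$: in the first case it contains $x_1$ but not $x_3$, in the second $x_3$ but not $x_1$. Hence the apex of a border triangle on an edge of $M$ lies in the vertex triangle at only \emph{one} endpoint of that edge, never both; your sentence ``the apex \dots\ is shared by the vertex triangles at the two endpoints'' is false for one of the two edges of $M$ through each corner. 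Consequently you have not shown that the $m$ vertex triangles around $M$ close up into a single cycle in $\overline{K}$, and doing so is tantamount to proving that the local twist (is it $T_2$ or $T_3$?) propagates consistently---which is precisely the ``main obstacle'' you postpone to the reconstruction step. (A smaller gap: your counting gives only that the \emph{average} number of vertex triangles per corner is $1$; you still need the local argument that $T_2,T_3$ cannot both be border triangles.) The paper's deep-blue-edge claim is exactly a packaging of this consistency: ``each vertex has a unique deep blue edge'' is the statement that the twist at $u$ is determined by the twist at its neighbours, and the paper proves it directly before any reduction is attempted. If you want to salvage your scaffold argument, that is the lemma you must prove first.
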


\begin{proof}
Let $X$ be a $24$-vertex map of vertex-type $[3^4, 4^1]$. So, $X$ has $\frac{24\times 4}{3}=32$ triangles and $\frac{24}{6}=6$ squares. Let us call an edge blue if it is the intersection of  two triangles and red if it is the intersection of one triangle and one square. So, each vertex is in two red edges and (hence) in  three blue edges. 
We say a blue  edge $uv$  is {\em deep blue} if 
 three triangles containing $u$ (respectively, $v$) lie on one side of $uv$ and one lies on the other side of $uv$. (Clearly, if $uvx$ and $uvy$ are triangles then  $uv$ is deep blue if and only if one of $ux$, $uy$ (respectively, $vx$, $vy$) is red. Since  two red edges are disjoint, it follows that an edge $uv$ is deep blue  if and only if there exist vertices $x$, $y$ such that $ux$, $vy$ are red and $uy$, $vx$ are blue.) 

\medskip 

\noindent {\bf Claim.} For  each vertex $u$, there exists a unique deep blue edge through $u$. 

\smallskip

Let the faces containing $u$ be $uv_1v_2$, $uv_2v_3$, $uv_3v_4$, $uv_4v_5$, $uv_5w_1v_1$.  So, $uv_3$ is not  deep blue.  Since $v_1u$ and $v_1w_1$ are red edges, $v_1v_2$ is blue. Let  $v_1v_2w_2$ be a triangle containing $v_1v_2$.  Since $v_3$ is in a unique square, 
$v_2v_3$ and $v_3v_4$ cannot be simultaneously red. Suppose both are blue. Let the face containing $v_2v_3$ (respectively, $v_3v_4$) and not containing $u$ be $v_2v_3w_4$ (respectively, $v_3v_4w_5$) (see Fig. \ref{fig:6} (a)). Then the square containing $v_2$ must contain $v_2w_4$ and the square containing $v_3$ must contain $v_3w_4$. Thus, $w_4$ is in two squares. This is not possible since vertex-type is $[3^4, 4^1]$. So, exactly one of $v_2v_3$,  $v_3v_4$ is red. If  $v_3v_4$ is red and $v_2v_3$ is blue (see Fig. \ref{fig:6} (b)) then  $uv_4$ is deep blue and $uv_2$ is not deep blue. In the other case,  $uv_2$ is deep blue and $uv_4$ is not deep blue. This proves the claim.


\smallskip

It follows by the claim that  the deep blue edges form a perfect matching in the edge graph of $X$. 
Let $\widetilde{X}$ be the map obtained from $X$ by removing all the $12$ deep blue edges. (This makes $24$ triangles to $12$ squares.) Then $\widetilde{X}$ is a $24$-vertex map with $32-24=8$ triangles and $6+12=18$ squares. Moreover, the degree of each vertex in $\widetilde{X}$ is $5-1=4$. 
Since $yw$ and $wz$ cannot both be deep blue (see Fig. \ref{fig:6} (c)). Thus,  one of $\gamma$, $\delta$ is a triangle in $\widetilde{X}$. So, $u$ is in at least one triangle in $\widetilde{X}$. Since there are 8 triangles and 24 vertices, it follows that each vertex is in exactly one triangle and hence in $4-1=3$ squares. Thus, $\widetilde{X}$ is a semi-equivelar map of  vertex-type $[3^1, 4^3]$.

Let $xuyv$ is a new square in $\widetilde{X}$, where $uv$ is a deep blue edge in $X$. Assume that $xuv$ is the unique triangle in $X$ containing $u$ on one side of $uv$. Then $ux$ is a red  edge and is in a square $\alpha$ (see Fig. \ref{fig:6} (c)). This implies that the other three faces (other than $xuv$ $ \&$  $\alpha$) incident to $x$ are triangles in $X$. Thus, $vx$ is blue and hence $vx$ is in a triangle $\beta$ of $\widetilde{X}$. Similarly $uy$ is in a triangle $\gamma$ in $\widetilde{X}$. These imply that the new square $xuyv$ in $\widetilde{X}$ is of type $2$ (as in the proof of Lemma \ref{lem3.11}). Therefore, $\widetilde{X}$ has at least $12$ type $2$ squares and hence by the proof of Lemma \ref{lem3.11}, $\widetilde{X}$  is the boundary of small rhombicuboctahedron.

\begin{figure}[ht]

\tikzstyle{ver}=[]
\tikzstyle{vert}=[circle, draw, fill=black!100, inner sep=0pt, minimum width=4pt]
\tikzstyle{vertex}=[circle, draw, fill=black!00, inner sep=0pt, minimum width=4pt]
\tikzstyle{edge} = [draw,thick,-]
\centering

\begin{tikzpicture}[scale=0.7]

\draw ({sqrt(3)}, 1) -- (0, 2) -- ({-sqrt(3)}, 1) -- ({-sqrt(3)}, -1) -- (0, -2) -- ({sqrt(3)}, -1) -- ({sqrt(3)}, 1);

\draw ({-sqrt(3)}, -1) -- (-3, 0) -- ({-sqrt(3)}, 1); \draw ({sqrt(3)}, -1) -- (3, 0) -- ({sqrt(3)}, 1);

\draw[edge, dotted] ({-sqrt(3)}, 1) -- ({-sqrt(3)}, 3) -- (0, 2); \draw[edge, dotted] ({sqrt(3)}, 1) -- ({sqrt(3)}, 3) -- (0, 2);

\draw (0, 0) -- ({-sqrt(3)}, 1); \draw (0, 0) -- ({-sqrt(3)}, -1);
\draw (0, 0) -- (0, 2); \draw (0, 0) -- ({sqrt(3)}, 1); \draw (0, 0) -- ({sqrt(3)}, -1);

\node[ver] () at (0, -.3) {$u$}; 
\node[ver] () at ({sqrt(3)}, -1.5) {$v_5$}; 
\node[ver] () at ({2.1}, 1.3) {$v_4$};
\node[ver] () at (0, 2.4) {$v_3$};
\node[ver] () at ({-2.1}, 1.3) {$v_2$};
\node[ver] () at ({-sqrt(3)}, -1.5) {$v_1$}; 
\node[ver] () at (0, -1.6) {$w_1$}; 
\node[ver] () at (-3.2, .4) {$w_2$}; 
\node[ver] () at (3.2, -.5) {$w_3$}; 
\node[ver] () at ({-2.1}, 2.8) {$w_4$}; 
\node[ver] () at ({2.2}, 2.8) {$w_5$}; 

\node[ver] () at (-2.5, -2) {$(a)$};  

\end{tikzpicture}
\begin{tikzpicture}[scale=0.7]

\draw ({sqrt(3)}, 1) -- (0, 2) -- ({-sqrt(3)}, 1) -- ({-sqrt(3)}, -1) -- (0, -2) -- ({sqrt(3)}, -1) -- ({sqrt(3)}, 1);

\draw ({-sqrt(3)}, -1) -- (-3, 0) -- ({-sqrt(3)}, 1); \draw ({sqrt(3)}, -1) -- (3, 0) -- ({sqrt(3)}, 1);

\draw ({-sqrt(3)}, 1) -- ({-sqrt(3)}, 3) -- (0, 2); 


\draw ({sqrt(3)}, 1) -- (3, 2) -- ({sqrt(3)}, 3) -- (0, 2);

\draw (0, 0) -- ({-sqrt(3)}, 1); \draw (0, 0) -- ({-sqrt(3)}, -1);
\draw (0, 0) -- (0, 2); \draw (0, 0) -- ({sqrt(3)}, 1); \draw (0, 0) -- ({sqrt(3)}, -1);

\node[ver] () at (0, -.3) {$u$}; 
\node[ver] () at ({sqrt(3)}, -1.5) {$v_5$}; 
\node[ver] () at (1.7, 1.5) {$v_4$};
\node[ver] () at (0, 2.4) {$v_3$};
\node[ver] () at ({-2.1}, 1.3) {$v_2$};
\node[ver] () at ({-sqrt(3)}, -1.5) {$v_1$}; 
\node[ver] () at (0, -1.6) {$w_1$}; 
\node[ver] () at (-3.2, .4) {$w_2$}; 
\node[ver] () at (3.2, -.5) {$w_3$}; 
\node[ver] () at ({-2.2}, 2.8) {$w_4$}; 
\node[ver] () at ({2.3}, 3) {$w_5$}; 
\node[ver] () at (3.1, 1.5) {$w_6$}; 

\node[ver] () at (-2.5, -2) {$(b)$};  


\end{tikzpicture}
\begin{tikzpicture}[scale=0.7]

\draw ({sqrt(3)}, 1) -- (0, 2) -- ({-sqrt(3)}, 1) -- ({-sqrt(3)}, -1) -- (0, -2) -- ({sqrt(3)}, -1) -- ({sqrt(3)}, 1);

\draw ({-sqrt(3)}, -1) -- (-3, 0) -- ({-sqrt(3)}, 1); 

\draw[edge, dashed]  (0, 0) -- ({-sqrt(3)}, 1); \draw (0, 0) -- ({-sqrt(3)}, -1); 

\draw (0, 0) -- (0, 2); \draw (0, 0) -- ({sqrt(3)}, 1); \draw (0, 0) -- ({sqrt(3)}, -1);

\node[ver] () at (0, -.3) {$u$};
\node[ver] () at (0.4, 2.1) {$y$};
\node[ver] () at ({-sqrt(3)}, 1.3) {$v$};
\node[ver] () at ({-sqrt(3)}, -1.5) {$x$}; 
\node[ver] () at ({sqrt(3)}, 1.3) {$w$};
\node[ver] () at ({sqrt(3)}, -1.5) {$z$};
\node[ver] () at (0, -1) {$\alpha$};
\node[ver] () at ({2.3-sqrt(3)}, 1) {$\gamma$};
\node[ver] () at (-2.3, 0) {$\beta$}; 
\node[ver] () at (1.3, 0) {$\delta$}; 

\node[ver] () at (-2.5, -2) {$(c)$}; 

\end{tikzpicture}
\caption{{(a) and (b): Parts of $X$ as in the proof of above claim. (c): Part of $\widetilde{X}$.}} \label{fig:6}
\end{figure}

 On the other hand, if $Y$ is the boundary of small rhombicuboctahedron then, as in the proof of Lemma \ref{lem3.11}, $Y$ has  twelve  type $2$ squares. We can divide each of these type $2$ squares into two triangles by adding one diagonal in two ways. It is not difficult to see that if we chose one diagonal in one type $2$ squares then there exists exactly one diagonal for each of the other 11 type $2$ squares so that these $12$ new diagonals form a perfect matching. 
 And we get a map $\widehat{Y}$ of vertex-type $[3^4, 4^1]$ in which these $12$ new edges are the deep blue edges. If we choose other set of $12$ diagonals then we get another map of vertex-type $[3^4, 4^1]$ which is isomorphic to  $\widehat{Y}$ under a reflection of $Y$. Thus, $\widehat{Y}$ is unique up to an isomorphism. Again, it is easy to see that the operations $X \mapsto \widetilde{X}$ and $Y \mapsto \widehat{Y}$ are inverse of each other. Since $24$-vertex map of vertex-type $[3^1, 4^3]$ on $\mathbb{S}^2$ with twelve type $2$ squares is unique, it follows that $24$-vertex map of vertex-type $[3^4, 4^1]$ on $\mathbb{S}^2$ is unique up to isomorphism. 

Let $X$ be a $60$-vertex map of vertex-type $[3^4, 5^1]$. As before we have $30$ deep blue edges and by removing, we get (by similar arguments)  a $60$-vertex map $\widehat{X}$ of vertex-type $[3^1, 4^1, 5^1, 4^1]$. Hence, by Lemma \ref{lem3.12}, $\widehat{X}$ is isomorphic to the boundary of small rhombicosidodecahedron. On the other hand, if $Y$ is the boundary of small rhombicosidodecahedron then $Y$ has $30$ squares. We can divide each of $30$ squares into two triangles by adding one diagonal in two ways.  Again, it is not difficult to see that if we chose one diagonal in one square then there exists exactly one  diagonal for each of the other $29$ squares so that these $30$ new diagonals form a perfect matching. And we get a map $\widetilde{Y}$ of vertex-type $[3^4, 5^1]$ in which these $30$ new edges are the deep blue edges. If we choose other set of $30$ diagonals then we get another map  of vertex-type $[3^4, 5^1]$ which is isomorphic to $\widetilde{Y}$ under a reflection of $Y$. Thus, $\widetilde{Y}$ is unique up to an isomorphism. Again, it is easy to see that the operations $X \mapsto \widehat{X}$ and $Y \mapsto \widetilde{Y}$ are inverse of each other. Since, by Lemma  \ref{lem3.12}, $60$-vertex map of vertex-type $[3^1, 4^1, 5^1, 4^1]$ on $\mathbb{S}^2$ is unique, it follows that $60$-vertex map of vertex-type $[3^4, 5^1]$ on $\mathbb{S}^2$ is unique up to isomorphism.
\end{proof}

\begin{lemma}\label{lem3.14} 
Let $K$ be a semi-equivelar spherical map. If the vertex-type of $K$ is $[4^2, n^1]$ for some $n\geq 3$ then $K$ is isomorphic to the boundary of the $2n$-vertex regular prism $P_n$. \linebreak If the vertex-type of $K$ is $[3^3, n^1]$ for some $n\geq 4$ then $K$ is isomorphic to the boundary of the $2n$-vertex antiprism $Q_n$.
\end{lemma}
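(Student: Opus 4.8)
The plan is to treat the two families in parallel: first fix the global face structure from the vertex-type together with Euler's formula, then show that the combinatorial gluing data of the two $n$-gons is rigid. By Corollary \ref{cor6}, a map $K$ of type $[4^2, n^1]$ on $\mathbb{S}^2$ has exactly $2n$ vertices, and so does one of type $[3^3, n^1]$. In the prism case every vertex has degree $3$, so $f_1 = 3n$ and Euler's formula gives $f_2 = n+2$; since the face-cycle at each vertex contains a single $n$-gon, $K$ has $n$ squares and two $n$-gons. In the antiprism case every vertex has degree $4$, so $f_1 = 4n$, $f_2 = 2n+2$, and $K$ has $2n$ triangles and two $n$-gons. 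In both cases each vertex lies in exactly one $n$-gon, so the two $n$-gons are vertex-disjoint and partition $V(K)$; I would write them as cycles $A = a_1\cdots a_n$ and $B = b_1\cdots b_n$ with indices modulo $n$. A key preliminary observation in both cases is that the only edges joining two vertices of $A$ (resp.\ of $B$) are the boundary edges of $A$ (resp.\ $B$), since the remaining edges at each vertex are forced by the degree to leave its $n$-gon.

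For the prism, each $a_i$ has degree $3$ with two edges on $A$, hence exactly one remaining edge, which must go to $B$; these form a perfect matching $a_i \mapsto b_{\sigma(i)}$. I would then read off the square carrying $a_ia_{i+1}$: by polyhedrality it meets $A$ only in that edge, so at $a_{i+1}$ it must turn onto the matching edge $a_{i+1}b_{\sigma(i+1)}$ and at $a_i$ onto $a_ib_{\sigma(i)}$, forcing the square to be $a_ia_{i+1}b_{\sigma(i+1)}b_{\sigma(i)}$ with $b_{\sigma(i)}b_{\sigma(i+1)}$ an edge. Since the only $B$--$B$ edges are boundary edges of $B$, this gives $\sigma(i+1) = \sigma(i)\pm 1$ for every $i$, so $\sigma$ is a rotation or reflection of $B$. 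Relabelling $B$ by this dihedral symmetry makes the matching and the $n$ squares coincide with those of $\partial P_n$, proving uniqueness.

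For the antiprism each $a_i$ has two edges leaving $A$, both landing in $B$. Because $n\ge 4$, no vertex of $A$ is adjacent to both $a_i$ and $a_{i+1}$, so the triangle on the boundary edge $a_ia_{i+1}$ has its apex in $B$, say $a_ia_{i+1}c_i$ with $c_i\in B$. At $a_{i+1}$ the triangles on $a_ia_{i+1}$ and on $a_{i+1}a_{i+2}$ already use the two off-edges $a_{i+1}c_i$ and $a_{i+1}c_{i+1}$, so the third triangle at $a_{i+1}$ must be $a_{i+1}c_ic_{i+1}$, forcing $c_ic_{i+1}$ to be an edge of $B$ and hence $c_{i+1}=c_i\pm1$. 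These triangles $a_{i+1}c_ic_{i+1}$ are precisely the triangles sitting on the boundary edges of $B$, so $i\mapsto c_ic_{i+1}$ must hit each of the $n$ edges of $B$ exactly once. This is where the real work lies: I would use that each edge of $B$ lies in $B$ and in exactly one triangle to exclude any non-monotone walk, since if $c$ ever reversed direction some edge of $B$ would be traversed twice and another never covered. Thus $c$ is monotone, i.e.\ a rotation of $B$, and relabelling produces exactly the triangulation of $\partial Q_n$.

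The main obstacle is precisely this rigidity step in the antiprism case — ruling out the mixed matchings $c$ — while the prism computation and all the face-counting are routine once the vertex partition into $A$ and $B$ is established. The degenerate values are harmless: $n=3$ in the antiprism family is the octahedron (type $[3^4]$) and $n=4$ in the prism family is the cube (type $[4^3]$), both of which are covered by Lemma \ref{lem3.4}; for these the generic counting still applies, only the labelling of the two $n$-gons is no longer canonical.
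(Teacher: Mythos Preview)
Your argument is correct. The paper takes a slightly different, more local route: it fixes one vertex $u_1$, writes down its link-cycle (which already contains one full $n$-gon $u_1\cdots u_n$ and the first few $b$-vertices), and then walks around that $n$-gon, determining the link-cycle of each $u_j$ in turn and checking at every step that the newly introduced vertex $b_j$ is genuinely new; this builds the prism (resp.\ antiprism) face by face until all $2n$ vertices and all faces are accounted for. Your approach is more global: you first use Corollary~\ref{cor6} and Euler/face-counting to see that there are exactly two $n$-gons partitioning $V(K)$, and then reduce everything to showing that the induced matching $\sigma$ (prism) or apex map $c$ (antiprism) between them is a rotation or reflection of $\ZZ_n$, via the sign-constancy argument. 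The paper's inductive build-up never needs to know in advance that there are $2n$ vertices or two $n$-gons, and so handles the degenerate $n=4$ prism uniformly without appealing to Lemma~\ref{lem3.4}; your version makes the bipartite structure and the single nontrivial step (monotonicity of $c$) more transparent. Both are short and of comparable difficulty.
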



\begin{proof}
Let the vertex-type of $K$ be $[4^2, n^1]$. Let $u_1 \in V(K)$ and the link-cycle of $u_1$ be $u_2\mbox{-}u_3\mbox{-}\cdots\mbox{-}u_n\mbox{-}b_n\mbox{-}b_1\mbox{-}b_2\mbox{-}u_2$ where faces $u_1u_2 \dots u_n, u_nu_{1}b_{1}b_n,
u_1u_2b_2b_1$ are incident at $u_1$. Then, the link-cycle of $u_2$ would be $u_3\mbox{-}u_4\mbox{-}\cdots\mbox{-}u_n\mbox{-}u_1\mbox{-}b_1\mbox{-}b_2\mbox{-}b_3\mbox{-}u_3$ for some vertex $b_3$. Clearly, $b_3 \not\in \{b_1, b_2, u_1, \dots, u_n\}$. If $b_3 = b_n$ then the edges $b_3u_3, b_3b_2, b_nu_n, b_nb_1$ are incident with $b_3$. This implies that $\deg(b_3)\geq 4$, a contradiction. So, $b_3 \neq b_n$. Continuing this way, we get the link-cycle of $u_j$ is $u_{j+1}\mbox{-}u_{j+2}\mbox{-}\cdots\mbox{-}u_n\mbox{-}u_1\mbox{-}\cdots\mbox{-}u_{j-1}\mbox{-}b_{j-1}\mbox{-}b_{j}\mbox{-}b_{j+1}\mbox{-}u_{j+1}$ for $j \geq 3$ where $b_t \neq b_{\ell}$ and $u_i \neq u_s$ for $\ell \neq t, i \neq s, 1 \leq \ell, t, i, s \leq n$. Hence, the faces of $K$ are $u_1u_2 \dots u_n, b_1b_2 \dots b_n, u_nu_{1}b_{1}b_n, u_iu_{i+1}b_{i+1} b_i$ $(1 \leq i \leq n-1)$.   Then, $K$ is isomorphic to the boundary of the prism $P_n$.

Let the vertex-type of $K$ be $[3^3, n^1]$. Let $u_1 \in V(K)$ and link-cycle of $u_1$ be $u_2\mbox{-}u_3\mbox{-}\cdots\mbox{-}u_n$ $\mbox{-} b_1\mbox{-}b_2\mbox{-}u_2$, where faces $u_1u_2 \dots u_n, b_1u_1u_n, b_1b_2u_1, u_1u_2b_2$ are incident with $u_1$. Then, the link-cycle of $u_2$ would be $u_3\mbox{-}\cdots\mbox{-}u_n\mbox{-}u_1\mbox{-}b_2\mbox{-}b_3\mbox{-}u_3$ for some vertex $b_3$. Then $b_3 \not\in \{u_1, u_2, \dots, u_n, b_2\}$. Also,  $b_3, b_1$ belong in link-cycle of $b_2$. So, $b_3 \neq b_1$. Hence, let the link-cycle of $u_3$ be $u_4\mbox{-}u_5\mbox{-}\cdots\mbox{-}u_n\mbox{-}u_1\mbox{-}u_2\mbox{-}$ $b_3\mbox{-}b_4\mbox{-}u_4$ for some vertex $b_4$. As before, $b_4 \not\in \{u_1, u_2, \dots, u_n, b_3, b_2\}$. If $b_4 = b_1$ then $b_4$ is in the edges $b_4u_4, b_4u_3, b_4b_3, b_4u_n, b_4u_1, b_4b_2$. This implies that $\deg(b_4) \geq 6$,   a contradiction. So, $b_4 \neq b_1$. Continuing this way, we get link-cycle of $u_j = u_{j+1}\mbox{-}u_{j+2} \mbox{-} \cdots\mbox{-}$ $u_n\mbox{-}u_1\mbox{-}\cdots\mbox{-}u_{j-1}\mbox{-}b_{j}\mbox{-}b_{j+1}\mbox{-}u_{j+1}$ where $b_t \neq b_{\ell}$ and $u_i \neq u_s$ for $\ell \neq t, i \neq s, 1 \leq \ell, t, i, s \leq n$. Clearly, the cycle $b_1\mbox{-}\cdots\mbox{-}b_n\mbox{-}b_1$ is the third face containing $b_j$ for each $j$. So, the faces of $K$ are $u_1u_2 \dots u_n, b_1b_2 \dots b_n, u_nb_nb_{1}, u_1b_{1}u_{n}, u_ib_ib_{i+1}, u_ib_{i+1}u_{i+1}$ $(1 \leq i \leq n-1)$.
Then, $K$ is isomorphic to the boundary of the antiprism $Q_n$.
\end{proof}

\begin{proof}[Proof of Theorem \ref{thm:s2}]
The result follows from Corollary \ref{cor6}  and  Lemmas \ref{lem3.4}, \ref{lem3.11}, \ref{lem3.12}, \ref{lem3.13}, \ref{lem3.14}.
\end{proof}

\begin{definition} \label{def:antipodal}
{\rm An {\em involution}  of a map is an automorphism of order 2. An involution $\rho$ of a map $X$ is called an {\em antipodal automorphism} (or {\em antipodal symmetry}) if no vertex, edge or face of $X$ is fixed by $\rho$. A map $X$ is called {\em centrally symmetric} if $X$ has  an antipodal symmetry.  
}
\end{definition}

We know that the antipodal map of the snub cube (respectively, of the snub dodecahedron) is not an automorphisms of the boundary of the snub cube (respectively, of the snub dodecahedron) (\cite{Cox1940}). We need the following stronger result for the proof of Theorem \ref{thm:rp2}. 

\begin{lemma}\label{lem:new}  If $X$ is the boundary of the snub cube or the boundary of the snub dodecahedron then $X$ does not have any antipodal symmetry. 
\end{lemma}


\begin{figure}[ht]

\tikzstyle{ver}=[]
\tikzstyle{vert}=[circle, draw, fill=black!100, inner sep=0pt, minimum width=4pt]
\tikzstyle{vertex}=[circle, draw, fill=black!00, inner sep=0pt, minimum width=4pt]
\tikzstyle{edge} = [draw,thick,-]
\centering

\begin{tikzpicture}[scale=0.32]

\draw (0, 0) -- (18,0) -- (18,18)-- (0,18) -- (0,0)-- (9,2) -- (18,0)-- (16,9)-- (18,18) -- (9,16)-- (0,18)-- (2,9) -- (0,0) -- (4,4)-- (9,2) -- (14,4)-- (16,9)-- (14,14) -- (9,16)-- (4,14)-- (2,9) -- (4,4)-- (6,6)-- (9,5) -- (12,6)-- (13,9)-- (12,12) -- (9,13)-- (6,12)-- (5,9) -- (6,6)-- (9,7)-- (11,9) -- (9,11)-- (7,9)-- (9,7) -- (9,2);  

\draw (18, 18) -- (14,14) -- (9,13) -- (9,16); \draw (0, 18) -- (4,14) -- (5,9) -- (2,9); 
\draw (18,0) -- (14,4) -- (13,9) -- (16,9) -- (11,9); \draw (14, 4) -- (12,6) -- (11,9) -- (12,12)--(14,14); 
\draw (4, 14) -- (6,12) -- (9,11) -- (9,13) -- (12,12)--(9,11); 
\draw (5, 9) -- (7,9) -- (6,6); \draw (4, 4) -- (9,5) -- (9,7) -- (12,6); 
\draw (7,9) -- (6,12);

 \node[ver] () at (-.6, 17.7) {$1$};  \node[ver] () at (18.7, 17.7) {$2$}; 
  \node[ver] () at (-.6, 0.3) {$4$};  \node[ver] () at (18.7, 0.3) {$3$}; 
 \node[ver] () at (9, 16.7) {$5$};  \node[ver] () at (4.1, 14.7) {$6$}; 
  \node[ver] () at (1.3, 8.9) {$7$};  \node[ver] () at (4.1, 3.3) {$8$}; 
 \node[ver] () at (9, 1.3) {$9$}; \node[ver] () at (13.9, 3.2) {$0$}; 
\node[ver] () at (16.8, 8.9) {$u$}; \node[ver] () at (13.8, 14.8) {$v$}; 
  
 \node[ver] () at (9, 7.7) {$a$}; \node[ver] () at (7.8, 9) {$b$};
 \node[ver] () at (9, 10.3) {$c$};  \node[ver] () at (10.2, 9) {$d$}; 
 \node[ver] () at (5.4, 6) {$e$}; \node[ver] () at (9.45, 4.4) {$f$}; 
\node[ver] () at (11.9, 5.4) {$g$}; \node[ver] () at (13.6, 9.7) {$h$};

\node[ver] () at (12.6, 12) {$i$}; \node[ver] () at (8.5, 13.5) {$j$}; 
\node[ver] () at (6.1, 12.8) {$k$}; \node[ver] () at (4.6, 8.4) {$\ell$};

\end{tikzpicture} 

\caption{{Projection of the boundary of the snub cube on the square 1234}} \label{fig:snub}
\end{figure}


\begin{proof} 
Let $X$ be the boundary of the snub cube. Assume that $X$ is as in Fig. \ref{fig:snub}. Suppose there exists  an antipodal automorphism $\rho$ of $X$. Since no edge is fixed by $\rho$, $x\rho(x)$ is a non-edge 
for each vertex $x$. Also, $\rho$ sends a square to another square. If $\rho(1234)=56kj$ then $\rho(1)=j$ or $k$. If $\rho(1)=j$ then $\rho(1\mbox{-}5\mbox{-}j)$ must be $j\mbox{-}5\mbox{-}1$ and hence $\rho(5)=5$, a contradiction. So, $\rho(1)\neq j$. Similarly,  $\rho(1)\neq k$. Thus, $\rho(1234)\neq 56kj$. Similarly, $\rho(1234)\neq 78el, 90gf, uvih$. So, $\rho(1234)=abcd$. If $\rho(1)=c$ then $\rho(\{2,4\})=\{b,d\}$ and hence $\rho(\{5,7\})=\{k,i\}$. This implies that $\rho(6)=j$ and hence $\rho(56kj)=56kj$, a contradiction. So, $\rho(1)\neq c$. If $\rho(1)=b$ then, by the same argument, $\rho(6)=\ell$ and hence $\rho(6\ell)=6\ell$, a contradiction. So, $\rho(1)\neq b$. Similarly, $\rho(2)\neq c, d$. So, $\rho((1,2))=(a,b)$ or $(d,a)$. In the first case,  $\rho(5)=e$ and hence $\rho(56kj)=78e\ell$. Then $\rho(156) = axy$, where $xy$ is an edge of $78e\ell$ and hence $\rho(156)$ is not a triangle, a contradiction. In the second case,  $\rho(5)=g$ and hence  $\rho(56kj)=gf90$. Then, by the same argument,  $\rho(156)$ is not a triangle, a contradiction.  This shows that $\rho(1234)\neq abcd$. Thus, there does not exist any antipodal automorphism of the boundary of the snub cube. By similar arguments, one can show that there does not exist any antipodal automorphism of the boundary of the snub dodecahedron. 
\end{proof} 

\begin{lemma}\label{lem3.15}  Let $X$ be the boundary of the pseudorhombicuboctahedron. Then 
$(a)$ $X$ is not a vertex-transitive map, and $(b)$ $X$  does not have antipodal symmetry. 
\end{lemma}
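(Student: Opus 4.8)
The plan rests on the square-type statistics from the proof of Lemma \ref{lem3.11}: $X$ is the $24$-vertex $[3^1,4^3]$-map with $8$ triangles and $18$ squares, and for the pseudorhombicuboctahedron the type vector is $(s_2,s_3,s_4)=(8,8,2)$, so there are exactly two type-$4$ squares $\alpha,\beta$. Every automorphism of a polyhedral map permutes faces preserving their number of sides and preserves edge-adjacency, hence preserves the type of each square. For $(a)$ I would double-count vertex/type-$4$-square incidences: the total is $4s_4=8$, i.e. $\sum_{v}t(v)=8$, where $t(v)$ denotes the number of type-$4$ squares through $v$. Vertex-transitivity would make $v\mapsto t(v)$ constant (the automorphism group preserving square-types), forcing $t(v)=8/24=1/3$ at every vertex, which is impossible. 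Hence $X$ is not vertex-transitive, regardless of whether $\alpha$ and $\beta$ share a vertex.

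For $(b)$ I read antipodal symmetry as the existence of a fixed-point-free involution $\sigma\in\mathrm{Aut}(X)$, one fixing no vertex, edge or face, so that $X/\langle\sigma\rangle$ is a polyhedral map on $\mathbb{RP}^2$. The structural input, again from Lemma \ref{lem3.11}, is: the eight type-$3$ squares form a single edge-adjacency $8$-cycle $C$ (the equatorial band); each type-$2$ square meets exactly one of $\alpha,\beta$ on an edge; and each type-$3$ square meets exactly one type-$2$ square on an edge. Colouring each band square by which of $\alpha,\beta$ its type-$2$ neighbour meets, one checks the colours alternate around $C$. Because $\sigma$ preserves types and $\{\alpha,\beta\}$ is exactly the set of type-$4$ squares, either $\sigma$ fixes $\alpha$, a fixed face (impossible), or $\sigma(\alpha)=\beta$, in which case $\sigma$ carries every type-$2$ square meeting $\alpha$ to one meeting $\beta$ and so reverses the alternating colouring of $C$.

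The crux is then a short analysis of the induced involution $\tau$ of $C$ inside the dihedral group. If $\tau$ is trivial, all eight band squares are fixed faces; the only involutory rotation is the antipodal rotation by $4$, which preserves the period-$2$ colouring and so cannot be colour-reversing; the four vertex-reflections fix two band squares (fixed faces); and each edge-reflection swaps the adjacent pair $F,F'$ across its axis, fixing the edge $F\cap F'$ of $X$ setwise. In every case $\sigma$ fixes a cell, contradicting freeness, so $X$ has no antipodal symmetry. I expect the real work to be confirming the two structural facts—that the type-$3$ squares form one $8$-cycle and that the $\alpha/\beta$ colouring alternates—since this is exactly where the $45^{\circ}$ twist distinguishing the pseudorhombicuboctahedron from the small rhombicuboctahedron intervenes: in the untwisted solid the swap $\alpha\leftrightarrow\beta$ is realised by the colour-preserving antipodal rotation, namely the central inversion, which is precisely the antipodal symmetry the twisted solid lacks.
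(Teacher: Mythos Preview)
Your argument for $(a)$ is correct and is essentially the paper's argument in different clothing: both use that automorphisms preserve square-type and that the eight vertices lying on type-$4$ squares form an invariant proper subset of $V(X)$. The double-counting packaging is neat but not a new idea.

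For $(b)$ you take a genuinely different route from the paper. The paper works directly with the labelled model $A\cup C$ of Figure~5: any antipodal automorphism must interchange the two type-$4$ squares and hence send $\{u_5,\dots,u_{12}\}$ to $\{u_{13},\dots,u_{20}\}$; there are then four possibilities for the image of the edge $u_5u_6$, and in each one a concrete face is carried to a non-face. Your argument is structural: the eight type-$3$ squares are exactly the band squares $F_1,\dots,F_8$, they form a single $8$-cycle under edge-adjacency, and the $\alpha/\beta$ colouring alternates around this cycle; a free involution swapping $\alpha$ and $\beta$ must then induce a colour-reversing involution of $C_8$, hence an edge-reflection, which fixes the edge $F_i\cap F_{i+1}$ of $X$ setwise. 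This is correct, and the two structural facts you flag as the ``real work'' can be read off from the construction in the proof of Lemma~\ref{lem3.11} with $D_2=C$: for odd $i$ the inner-octagon edge of $F_i$ lies in a type-$2$ square adjacent to $\alpha$ while its outer-octagon edge meets a triangle of $C$, and for even $i$ the roles reverse. Your approach has the virtue of explaining \emph{why} the $45^{\circ}$ twist destroys the symmetry---it shifts the $\beta$-side colouring by one step relative to the $\alpha$-side, so that the only colour-reversing dihedral involutions are the ones with fixed edges---whereas the paper's explicit four-case check is quicker to write down but less illuminating.
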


\begin{proof} As before, for $2\leq i\leq 4$, let's call a square {\em type $i$} if it intersects $i$ other squares on edges. Then there are two type 4 squares, eight type 3 squares and eight type 2 squares.   Let $u$ be a vertex of a type 4 square (there are 8 such vertices) and $v$ be a vertex which is not in any type 4 square (there are 16 such vertices). If  $\varphi$ is  an automorphism, then the image of a type 4 square under $\varphi$ is again a type 4 square. Therefore, $\varphi(u)$ cannot be $v$. Thus, $X$  is not a vertex-transitive map. This proves part $(a)$. 

Consider $X$ as $A \cup C$ as in Fig. \ref{fig:5}. If possible, let $\rho$ be an antipodal automorphism of $X$. Then $\rho$ sends all the eight type $3$ squares to themselves. And $\rho$ interchanges two type $4$ squares. These imply that $\rho(\{u_5, \dots, u_{12}\}) = \{u_{13}, \dots, u_{20}\}$. Since the edge $u_5u_6$ is in the intersection of two squares, $\rho(u_5u_6)$ is $u_{14}u_{15}$, $u_{16}u_{17}, u_{18}u_{19}$ or $u_{20}u_{13}$. Assume $\rho(u_5u_6) = u_{16}u_{17}$. Since $\rho$ is an antipodal automorphism, this implies that 
\begin{align*}
\rho = (u_5, u_{16})(u_6, u_{17})(u_7, u_{18})(u_8, u_{19})(u_9, u_{20})(u_{10}, u_{13})(u_{11}, u_{14})(u_{12}, u_{15})\cdots.
\end{align*}
Then $\rho(u_5u_6u_{14}u_{13}) = u_{16}u_{17}u_{11}u_{10}$. This is a contradiction since $u_5u_6u_{14}u_{13}$ is a face but $u_{16}u_{17}u_{11}u_{10}$ is not a face of $X$. Similarly, we get contradictions in other three cases. This proves part $(b)$. 
\end{proof} 

\begin{proof}[Proof of Corollary \ref{cor:s2vt}] 
From Lemma \ref{lem3.15} $(a)$, we know that the  boundary of the pseudorhombicuboctahedron is not a vertex-transitive map. 

From the results in \cite{Ba1991}, it follows that all the maps on $\mathbb{S}^2$ other than the boundary of the pseudorhombicuboctahedron mentioned in Theorem \ref{thm:s2}
are vertex-transitive.  The result now follows by Theorem \ref{thm:s2}.
\end{proof}

\begin{proof}[Proof of Theorem \ref{thm:rp2}] 
Let the  number of vertices in $Y$ be $n$ and the vertex-type of $Y$ be $[p_1^{n_1}, p_2^{n_2},$ $\dots, p_{\ell}^{n_{\ell}}]$.
Since $\mathbb{S}^2$ is a $2$-fold cover of $\mathbb{RP}^2$, by pulling back, we get a $2n$-vertex map of vertex-type $[p_1^{n_1}, \dots, p_{\ell}^{n_{\ell}}]$ on $\mathbb{S}^2$. Hence, by Corollary \ref{cor6}, $(n, [p_1^{n_1}, \dots, p_{\ell}^{n_{\ell}}]) = (2, [3^3])$, $(3, [3^4])$, $(4, [4^3]),$ $(6, [3^5]),$ $ (10, [5^3]),$ $(30, [3^4, 5^1]),$ $(12, [3^4, 4^1]),$ $(15, [3^1, 5^1, 3^1, 5^1]),$ $(6, [3^1, 4^1, 3^1, 4^1]),$ $(30, [3^1,  4^1, 5^1, 4^1])$, $(12, [3^1, 4^3])$, $(30,[5^1, 6^2]),$ $(24, [4^1, 6^1, 8^1]),$ 
$(12, [4^1, 6^2])$, 
$(60, [4^1, 6^1, 10^1]),$  $(6, [3^1,$ $ 6^2]),$ $(12,[3^1,$ $ 8^2]),$ $(30, [3^1, 10^2])$, $(3, [3^1, 4^2])$, $(r, [4^2, r^1])$ for some $r \geq 5$ or $(s, [3^3, s^1])$ for some $s \geq 4$.

Clearly, $(n, [p_1^{n_1}, \dots, p_{\ell}^{n_{\ell}}]) = (2, [3^3])$,  $(3, [3^4])$,  $(4, [4^3])$,  $(6, [3^1, 4^1, 3^1, 4^1])$, $(12, [3^1, 8^2])$,  $(6, [3^1, 6^2])$ or $(3, [3^1, 4^2])$ is not possible.

Now, assume $(n, [p_1^{n_1}, \dots, p_{\ell}^{n_{\ell}}]) = (12, [3^4, 4^1])$. Then the $2$-fold cover $\widetilde{Y}$ of $Y$ is a $24$-vertex map of vertex-type $[3^4, 4^1]$ on $\mathbb{S}^2$ with an antipodal symmetry. From Lemma \ref{lem3.12}, $\widetilde{Y}$ is the boundary of snub cube. This is not possible by Lemma \ref{lem:new}. Thus, $(n, [p_1^{n_1}, \dots, p_{\ell}^{n_{\ell}}]) \neq (12, [3^4, 4^1])$.
Similarly, $(n, [p_1^{n_1}, \dots, p_{\ell}^{n_{\ell}}]) \neq (30, [3^4, 5^1])$.

If $[p_1^{n_1}, \dots, p_{\ell}^{n_{\ell}}] = [4^2, n^1]$ for some $n \geq 5$ then $Y$ has a unique face $\alpha$ with $n$ vertices and hence contains all the vertices. Thus, all the vertices of a square $\beta$ are in $\alpha$. Then intersection of $\alpha$ and $\beta$ cannot be a vertex or an edge, a contradiction. Thus, $(n, [p_1^{n_1}, \dots, p_{\ell}^{n_{\ell}}]) \neq (n, [4^2, n^1])$ for any $n \geq 5$.
Similarly $(n, [p_1^{n_1}, \dots, p_{\ell}^{n_{\ell}}]) \neq (n, [3^3, n^1])$ for any $n \geq 4$.
This proves the first part.

Let $(n, [p_1^{n_1}, \dots, p_{\ell}^{n_{\ell}}]) =  (6, [3^5])$. Then $Y$ is a $6$-vertex triangulation of $\mathbb{RP}^2$. It is known (cf. \cite{Da1999}) that there exists unique such triangulation.

Let $(n, [p_1^{n_1}, \dots, p_{\ell}^{n_{\ell}}]) =  (12, [3^1, 4^3])$ then the $2$-fold cover $\widetilde{Y}$ of $Y$ is a $24$-vertex map of vertex-type $[3^1, 4^3]$ on $\mathbb{S}^2$ with an antipodal symmetry. Therefore, by Lemma \ref{lem3.15}, $\widetilde{Y}$ is isomorphic to the boundary of small rhombicuboctahedron. Hence $Y$ is unique.

If $(n, [p_1^{n_1}, \dots, p_{\ell}^{n_{\ell}}]) = (10, [5^3])$ then the $2$-fold cover $\widetilde{Y}$ of $Y$ is a $20$-vertex map of vertex-type $[5^3]$ on $\mathbb{S}^2$. Hence, by Lemma \ref{lem3.4}, $\widetilde{Y}$ is the boundary of the dodecahedron. By identifying antipodal vertices of $\widetilde{Y}$ we get $Y$. Since $\widetilde{Y}$ is unique, $Y$ is unique.  Similarly, for each $(n, [p_1^{n_1}, \dots, p_{\ell}^{n_{\ell}}]) = (24, [4^1, 6^1, 8^1]), (12, [4^1, 6^2]),$ $(30, [5^1, 6^2]),$ $(30, [3^1, 10^2]),$ $(30, [3^1, 4^1, 5^1, 4^1]),(15, [3^1, 5^1, 3^1, 5^1])$ or $(60, [4^1, 6^1, 10^1])$, there is a unique $n$-vertex map of vertex-type $[p_1^{n_1}, $ $\dots, p_{\ell}^{n_{\ell}}]$ on $\mathbb{RP}^2$.
This proves the last part.
\end{proof}

\begin{proof}[Proof of Corollary \ref{cor:tiling}]
If $P$ is one of the  polytopes  mentioned in Section \ref{sec:example}  then (i) the centre of $P$ is $(0,0,0)$, (ii)  all the vertices of $P$ are points on the unit 2-sphere with centre $(0,0,0)$, (iii) each 2-face of $P$ is a regular polygon, and (iv)  lengths of all the edges of $P$ are same. Let $\partial P$ denote the boundary complex of $P$. Let $\eta : \mathbb{R}^3\setminus\{(0,0,0)\} \to \mathbb{S}^2$ be the mapping given by $\eta(x_1, x_2, x_3) = (x_1^2+x_2^2+x_3^2)^{-\frac{1}{2}}(x_1,x_2,x_3)$. Then $\eta(\partial P)$ is a map on $\mathbb{S}^{\hspace{.2mm}2}$. Since $P$ satisfies the properties (i) - (iv), this map is a semi-regular tiling of $\mathbb{S}^2$. (For example, the image of each edge of $P$ under $\eta$ is a great circle arc on $\mathbb{S}^{\hspace{.2mm}2}$ and hence a geodesic.) Part $(a)$ now follows from Theorem \ref{thm:s2}. 

Since $\mathbb{S}^2$ is the universal cover of $\mathbb{RP}^2$, part $(b)$ follows from part $(a)$. 
(More explicitly, let $Y$ be a semi-equivelar map on $\mathbb{RP}^2$. By pulling back to the universal cover, we get a semi-equivelar map $\widetilde{Y}$ (of same vertex-type) on $\mathbb{S}^2$. By part $(a)$, up to an isomorphism, $\widetilde{Y}$ is a semi-regular tiling of $\mathbb{S}^2$ and $Y$ is obtained from $\widetilde{Y}$ as quotient by an order 2 automorphism of $\widetilde{Y}$. This implies that, up to an isomorphism,  $Y$ is a semi-regular tiling on $\mathbb{RP}^{\hspace{.2mm}2}$.)  
\end{proof}

\begin{proof}[Proof of Corollary \ref{cor:rp2vt}]
Since all the maps on $\mathbb{RP}^{\hspace{.2mm}2}$ mentioned in Theorem \ref{thm:rp2} are vertex-transitive (\cite{Ba1991}), the result follows by Theorem \ref{thm:rp2}.
\end{proof}


\medskip

\noindent {\bf Acknowledgements:}
The first author is supported by SERB, DST (Grant No.\,MTR/2017\!/ 000410) and the UGC Centre for Advanced Studies. A part of this work was done while the first author was in residence at the MSRI in Berkeley, California, during the Fall 2017 semester. This visit at MSRI was supported by the National Science Foundation under Grant No.~- 1440140. During a part of this work, the second author was  a UGC - Dr. D. S. Kothari PDF (Award No.~- F.4-2/2006 (BSR)/MA/16-17/0012) at IISc, Bangalore. 
The authors thank the anonymous referees for several useful comments. 


{\small

}

\end{document}